\def\XXint#1#2#3{{\setbox0=\hbox{$#1{#2#3}{\int}$ }
\vcenter{\hbox{$#2#3$ }}\kern-.6\wd0}}
\DeclareMathOperator{\divergence}{div}
\numberwithin{equation}{section}
\title{ Reconstruction from  boundary measurements  for less regular conductivities}
\author{\text{${}^{a}$Andoni Garc\'ia  \and  ${}^{b}$Guo Zhang}  \thanks{ ${}^{b}$  Corresponding author\ Telephone: +358 40 8053453, Fax: +358 40 8053453}
} 
\institute{${}^{a}$Department of Mathematics and Statistics, P.O. Box 68 (Gustaf H\"{a}llstr\"{o}min katu 2b)
FI-00014
{\small{ University of Helsinki, Finland}} \and
{\small${}^{b}$ Department of Mathematics and Statistics, P.O. Box 35 (MaD), FI-40014} 
{\small University of Jyv\"{a}skyl\"{a}, Finland }
\email{${}^{a}$andoni.garcia@helsinki.fi \and   ${}^{b}$guozhang@jyu.fi }}
\begin{document}

\date{}
\maketitle
%\tableofcontents
%\setcounter{tocdepth}{1}
%

\begin{abstract}
In this paper, following Nachman's idea \cite{Na1} and Haberman and Tataru's idea \cite{HT1},  we reconstruct $C^1$ conductivity $\gamma$ 
or Lipchitz conductivity $\gamma$ with small enough value of $|\nabla log\gamma|$ in a Lipschitz domain $\Omega$ from the Dirichlet-to-Neumann
map $\Lambda_{\gamma}$. In the appendix the authors and R. M. Brown recover the gradient of a $C^1$-conductivity at the boundary of a Lipschitz domain from the Dirichlet-to-Neumann
map $\Lambda_{\gamma}$.
 \end{abstract}
 
\subclass{35R30}
\keywords{Inverse conductivity problem, Dirichlet-to-Neumann map, Calder\'{o}n problem, Boundary integral equation, Bourgain's space}

\tableofcontents
\setcounter{tocdepth}{1}

\section{Introduction}
Let $\Omega\subset\mathbb{R}^n, n\geq2$ be an open, bounded domain  and let $\gamma$ be a strictly positive real valued function
defined on $\Omega$ which gives the conductivity at a given point. Given a voltage potential $f$ on the boundary, the equation for the potential in the interior, under
the assumption of no sinks or sources of current in $\Omega$, is
\begin{equation}\label{0.1}
\renewcommand{\arraystretch}{1.25}
\begin{array}{lll}
\divergence(\gamma\nabla u)=0,\ \ \ \ \text{in}\ \Omega,\ \ \ \ u$\textbar$_{\partial\Omega}=f.
\end{array}
\end{equation}
The Dirichlet-to-Neumann map is defined in this case as follows:
\[
\renewcommand{\arraystretch}{1.25}
\begin{array}{lll}
\Lambda_{\gamma}(f)=\gamma\frac{\partial u}{\partial \nu}$\textbar$_{\partial\Omega},
\end{array}
\]
where $\frac{\partial}{\partial \nu}$ is the outward normal derivative at the boundary. For $\gamma\in L^{\infty}(\Omega)$ and being the boundary of $\Omega$ Lipschitz, then
 $\Lambda_{\gamma}$ is a well defined map
from $H^{\frac{1}{2}}(\partial\Omega)$ to $H^{-\frac{1}{2}}(\partial\Omega)$.

The Calder\'{o}n problem concerns the inversion of the map $\gamma\rightarrow\Lambda_{\gamma}$, i.e., whether $\Lambda_{\gamma}$ determines
 $\gamma$ uniquely and in that
case how to reconstruct $\gamma$ from $\Lambda_{\gamma}$.

For the boundary determination: if $\partial{\Omega}$ is $C^{\infty}$ and $\gamma\in C^{\infty}(\bar{\Omega})$, Kohn and Vogelius \cite{KV1}
showed that $\Lambda_{\gamma}$ determines $\frac{\partial^k\gamma}{\partial \nu^k}\arrowvert_{\partial{\Omega}}$ for all $k\geq 0$; if $\partial{\Omega}$ is
Lipschitz and $\gamma\in Lip(\Omega)$, Brown \cite {Bro1} showed that $\gamma\arrowvert_{\partial{\Omega}}$ can be recovered from the knowledge of $\Lambda_{\gamma}$; if $\partial{\Omega}$ is
$C^1$ and $\gamma\in C^{1}(\bar{\Omega})$, Nakamura and Tanuma \cite{NT1} showed that $\frac{\partial\gamma}{\partial \nu}\arrowvert_{\partial{\Omega}}$ can be 
recovered from $\Lambda_{\gamma}$. Now, being $\gamma\in C^{1}(\bar{\Omega})$ and for $\Omega$ Lipschitz domain, the authors and  Brown together show the recovering of the
 gradient of the conductivity at the boundary (see Appendix).

For the higher dimensional problem $(n\geq 3)$, uniqueness was first proven for piecewise analytic conductivities by Kohn and Vogelius \cite{KV2}. Later, Sylvester and 
Uhlmann showed that the uniqueness holds for smooth conductivities in their fundamental paper \cite{SU1} which opened the door for studying the Calder\'{o}n problem. 
Generalization to less regular conductivities had been obtained by several authors. Brown \cite{B1} obtained uniqueness under the assumption of 
$\frac{3}{2}+\epsilon$ derivatives. P\"{a}iv\"{a}rinta, Panchenko and Uhlmann \cite{PPU1} proved uniqueness under the assumption of $\frac{3}{2}$ bounded derivatives. 
Brown and Torres \cite{BT1} obtained uniqueness under the assumption of $\frac{3}{2}$ derivatives being in $L^p, p>2n$. Later, Uhlmann \cite{Uhl1} proposed a conjecture 
whether uniqueness holds in dimension $n\geq 3$ for Lipschitz or less regular conductivities.

Recently, Haberman and Tataru \cite{HT1} gave a partial answer to this conjecture. They used a totally new idea to construct CGO solutions in Bourgain's space and
showed uniqueness for $C^1$ conductivity $\gamma$ 
or Lipchitz conductivity $\gamma$ with small enough value of $|\nabla log\gamma|$. The ideas and techniques coming from this work have been widely used as can be seen in the papers by Zhang \cite{Z}, Caro, Garc\'ia and Reyes \cite{CGR} or Caro and Zhou \cite{CZ}.

If uniqueness holds, one whether or not  construct the conductivity in the domain $\Omega$ by the boundary measurements. Nachman \cite{Na1} and Novikov \cite{Nov1} independently 
solved  this problem for $C^2$ conductivity.

We will briefly describe Nachman's idea as follows. For $\gamma\in C^2(\bar{\Omega})$, if $u$ is a solution to \eqref{0.1}, Sylvester and Uhlmann reduced 
$v=\gamma^{\frac{1}{2} }u$ to satisfy 
\begin{equation}\label{0.2}
 -\triangle v+qv=0\ \ \ \ \ \ \text{in}\  \Omega,
\end{equation}
where $q=\frac{\triangle \gamma^{1/2}}{\gamma^{1/2}}$, and the corresponding Dirichlet-to-Neumann map $\Lambda_{q}$ to \eqref{0.2} is determined by $\Lambda_{\gamma}$.
For $\rho_{1}^{(n)} \cdot \rho_{1}^n=0,  \rho_{1}^{(n)} \in  \mathbb{C}^n $ and $|\rho_{1}^{(n)}|\rightarrow \infty,\  \text{as}\ n\rightarrow\infty$, Sylvester and Uhlmann constructed a 
family of CGO solutions $v^{(n)}=e^{x \cdot \rho_{1}^{(n)}}(1+\Phi_1^{(n)}(x))$  to
\begin{equation}\label{0.3}
 -\triangle v+qv=0\ \ \ \ \ \ \text{in}\  \mathbb{R}^n,
\end{equation}
with the remainder term $\Phi_1^{(n)}(x)$ decaying to zero in some sense as $|\rho_{1}^{(n)}|\rightarrow \infty$,
where $q=\frac{\triangle \gamma^{1/2}}{\gamma^{1/2}}$ in $\Omega$ and $q=0$ outside $\Omega$.

For the appropriate chosen $\rho_{2}^{(n)} \cdot \rho_{2}^n=0,  \rho_{2}^{(n)} \in  \mathbb{C}^n $ and $\rho_{1}^{(n)}+\rho_{2}^{(n)}=ik$, Green's formula gives us 
\begin{equation}\label{0.4}
 \displaystyle\int_{\mathbb{R}^n} q(x)e^{ix \cdot k}(1+\Phi_1^{(n)}(x))dx =
\displaystyle\int_{\partial{\Omega}}(\Lambda_{q}e^{x \cdot \rho_{2}^{(n)}}- \frac{\partial{e^{x \cdot \rho_{2}^{(n)}}}}{\partial{\nu}})v^{(n)}dS,
\end{equation}
where 
\begin{equation}
\Lambda_q=\left(\gamma^{-1/2}\Lambda_\gamma\gamma^{-1/2}+\frac{1}{2}\gamma^{-1}\frac{\partial\gamma}{\partial\nu}\right)|_{\partial\Omega}.
\end{equation}
Letting $n\rightarrow \infty$ in \eqref{0.4} we conclude from the decay property of $\Phi_1^{(n)}(x)$
\begin{equation}\
 \hat{q}(k)=
\lim\limits_{n\rightarrow \infty}\displaystyle\int_{\partial{\Omega}}(\Lambda_{q}e^{x \cdot \rho_{2}^{(n)}}-
 \frac{\partial{e^{x \cdot \rho_{2}^{(n)}}}}{\partial{\nu}})v^{(n)}dS,
\end{equation}
so the problem is then to recover the boundary values $v^{(n)}\arrowvert_{\partial{\Omega}}=f_{\rho_{1}^{(n)}}$.

On the othe hand, $v^{(n)}$ is a solution to the exterior problem:
\begin{equation}\label{eq:0.5}
 \left\{
\renewcommand{\arraystretch}{1.25}
\begin{array}{lll}
-\triangle {v^{(n)} }=0 \ \ \ \ \ \  \ \ \ \ \text{in}\  \mathbb{R}^n\backslash \bar{\Omega},\\
v^{(n)}\arrowvert_{\partial{\Omega}}=f_{\rho_{1}^{(n)}},\ \ \ \ \frac{\partial v^{(n)}}{\partial{\nu}}\arrowvert_{\partial{\Omega}}=\Lambda_{q}f_{\rho_{1}^{(n)}}.
\end{array}
\right.
\end{equation}
If $v^{(n)}$ satisfies the radiation condition 
\begin{equation}\label{0.6}
 \lim\limits_{R\rightarrow \infty}\displaystyle\int_{|y|=R}G_{\rho_{1}^{(n)}}(x, y)\frac{\partial(v^{(n)}-e^{x \cdot \rho_{1}^{(n)}})}{\partial{\nu}(y)}
-\frac{\partial{G_{\rho_{1}^{(n)}}(x, y)}}{\partial{\nu}(y)}(v^{(n)}-e^{x \cdot \rho_{1}^{(n)}})dS(y)=0,
\end{equation}
using Green's formula in \eqref{eq:0.5} we can reduce $f_{\rho_{1}^{(n)}}$  to satisfy a boundary integral equation 
\begin{equation}\label{0.7}
f_{\rho_{1}^{(n)}}=e^{x \cdot \rho_{1}^{(n)}}-(S_{\rho_{1}^{(n)}}\Lambda_{q}-B_{\rho_{1}^{(n)}}-\frac{1}{2}I)f_{\rho_{1}^{(n)}} 
\end{equation}
and \eqref{0.7} is uniquely solvable by Fredholm alternative theory, where $G_{\rho_{1}^{(n)}}, S_{\rho_{1}^{(n)}} \ \text{and}\ B_{\rho_{1}^{(n)}} $ are
defined in Section $3$.

Unfortunately, it is not easy to directly check that  $v^{(n)}$ satisfy \eqref{0.6}. To move around  this obstacle, Nachman \cite{Na1} dealt with this problem in an inverse direction:
He first constructed CGO solutions to \eqref{0.3} automatically satisfying the radial condition \eqref{0.6}  from boundary integral equation \eqref{0.7} and 
then showed that these solutions are the same as the solutions obtained by Sylvester and Uhlmann\cite{SU1}.

For $\gamma \in C^1(\bar{\Omega})$ 
or $\gamma \in Lip(\Omega)$ with the small enough value of $|\nabla log\gamma|$ we will follow Nachman's idea to construct CGO solutions to the equation 
\begin{equation}\label{0.8}
 \divergence(\gamma\nabla u)=0,\ \ \ \ \text{in}\ \mathbb{R}^n
\end{equation}
from the boundary integral equation \eqref{0.7}. In view of the less smooth regular $\gamma$, we need to do some changes in the above steps. First we reduce conductivity
$\gamma$ to the case $\gamma\equiv 1$ near the boundary of $B_{R}(0)\supset \Omega$ and show that the new Dirichlet-to-Neumann map $\tilde{\Lambda}_{\gamma}$ 
corresponding to $\gamma$ in $B_{R}(0)$ is determined by $\Lambda_{\gamma}$. Next we construct CGO solutions to \eqref{0.8} from the boundary integral
 equation \eqref{0.7} on the boundary $\partial{B}_{R}(0)$ and show that these solutions are the same as the solutions obtained by Haberman and Tataru \cite{HT1}.

We state the theorem as follows.
\begin{theorem}\label{t0.1}
 The conductivity $\gamma$ can be recovered in the domain $\Omega$ by the knowledge of  $\Lambda_{\gamma}$ under one of the following assumptions:\\
(a)\ Let $\Omega \subset \mathbb{R}^n, n\geq 3$ be a bounded domain with  Lipschitz boundary and let $\gamma(x) \in C^1(\bar{\Omega})$ be a real valued 
function with $\gamma(x) \geq C_0>0.$\\
(b)\  Let $\Omega \subset \mathbb{R}^n, n\geq 3$ be a bounded domain with Lipschitz boundary and let $\gamma(x) \in Lip({\Omega})$ be a real valued 
function with $\gamma(x) \geq C_0>0$ and such that there exists a constant $\varepsilon_{{n, \Omega}}$ satisfying $|\nabla {log\gamma}(x)| < \varepsilon_{{n, \Omega}}$.
\end{theorem}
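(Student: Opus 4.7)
The proof plan follows Nachman's reconstruction scheme outlined in the introduction, but bypasses the Liouville transformation $v=\gamma^{1/2}u$ (unavailable when $\gamma$ is only $C^1$ or Lipschitz, since $\Delta\gamma^{1/2}$ is then at best a distribution) by working directly with $\divergence(\gamma\nabla\cdot)$ and by identifying the CGO solutions produced by the boundary integral equation with the ones constructed by Haberman and Tataru in their Bourgain-type space $\dot{X}^{1/2}_{\zeta}$.

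First, I would extend $\gamma$ from $\Omega$ to all of $\mathbb{R}^n$ so that $\gamma\equiv 1$ outside some ball $B_R(0)\supset\bar{\Omega}$, preserving the $C^1$ hypothesis in (a) or the Lipschitz hypothesis with small $|\nabla\log\gamma|$ in (b). Using the boundary values of $\gamma$ (Brown \cite{Bro1}) and, under (a), of $\nabla\gamma$ (see the Appendix), one can solve the exterior Dirichlet problem on $B_R(0)\setminus\bar{\Omega}$ with data determined by $\Lambda_\gamma$ and thereby define a new Dirichlet-to-Neumann map $\tilde{\Lambda}_\gamma$ on the smooth sphere $\partial B_R(0)$, computable from $\Lambda_\gamma$. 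From this point on, the boundary is smooth, which is crucial for the layer-potential analysis in the next step.

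Next I would introduce the Faddeev-type Green function $G_\rho$ of $-\Delta$ and the single- and double-layer potentials $S_\rho$, $B_\rho$ on $\partial B_R(0)$ in the sense of Section~3. Imposing Nachman's radiation condition \eqref{0.6}, any CGO solution $v_\rho$ of $\divergence(\gamma\nabla v_\rho)=0$ in $\mathbb{R}^n$ must have its trace $f_\rho=v_\rho|_{\partial B_R}$ satisfy the boundary integral equation \eqref{0.7}, with $\Lambda_q$ replaced by $\tilde{\Lambda}_\gamma$. I would establish unique solvability of this equation by the Fredholm alternative: compactness and mapping properties of $S_\rho\tilde{\Lambda}_\gamma-B_\rho-\tfrac12 I$ on appropriate Sobolev spaces of $\partial B_R(0)$ give existence modulo a finite-dimensional kernel, while injectivity is inherited from the uniqueness theorem of Haberman and Tataru \cite{HT1}, which is the key ingredient available in this regularity range.

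Once $f_\rho$ has been obtained, I would reconstruct $v_\rho$ on $\mathbb{R}^n$ by the layer-potential representation and identify it with the Haberman--Tataru CGO solution. Its remainder then inherits the averaged decay in Bourgain's space as $|\rho|\to\infty$, so Green's formula as in \eqref{0.4} expresses a quantity formally of the type $\widehat{q}(k)$ (with $q=\Delta\gamma^{1/2}/\gamma^{1/2}$ interpreted in the weak sense, or after mollifying $\gamma$ by $\gamma_\varepsilon\in C^\infty$ and passing to the limit) as an explicit limit involving only $\tilde{\Lambda}_\gamma$ and exponential test functions. Fourier inversion together with the boundary values of $\gamma$ then recovers $\gamma$ in $\Omega$. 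The hard step is the identification of the integral-equation CGO with the Haberman--Tataru CGO: the latter lives in $\dot{X}^{1/2}_\zeta$, a space in which classical traces on $\partial B_R(0)$ are not directly available, so matching the two will require a careful smoothing-and-passage-to-the-limit argument and quantitative control on the operators $S_\rho$, $B_\rho$ uniformly in the large parameter $|\rho|$.
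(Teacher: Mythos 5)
Your plan is essentially the paper's own route: extend $\gamma$ (using the boundary determination of $\gamma$ and, in case (a), of $\nabla\gamma$ from the Appendix) so that $\gamma\equiv 1$ near $\partial B_R(0)$, transfer the Dirichlet-to-Neumann map to the sphere by the layer-potential/invertibility argument of Section~2, solve Nachman's boundary integral equation by the Fredholm alternative with injectivity supplied by the Haberman--Tataru contraction on $\dot{X}_\rho^{1/2}$, identify the resulting CGO solutions with the Haberman--Tataru ones through the integral equation satisfied by the remainder $\Phi$, and recover $\hat{q}(k)$ from the boundary data after averaging in $(s,\eta_1)\in[\lambda,2\lambda]\times S$ to exploit the averaged decay. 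The one step you leave implicit, passing from $q$ back to $\gamma$, is done in the paper by solving the nonlinear Dirichlet problem $\triangle w+|\nabla w|^2=q$ in $B_R(0)$ with $w|_{\partial B_R(0)}=0$ for $w=\log\sqrt{\gamma}$, which is uniquely solvable by the maximum principle.
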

Our paper is organized as follows. In Section $2$ we reduce the conductivity $\gamma$ to be $\gamma\equiv 1$ near the boundary. In Section $3$ we construct CGO solutions
from the boundary integral equation. In Section $4$ we state the reconstruction of the conductivity $\gamma$ from $\Lambda_{\gamma}$. In the appendix the authors and 
R. M. Brown give the proof of the recovering of the gradient of a $C^1$-conductivity at the boundary of a Lipschitz domain.

\section{Reduction to The Case $\gamma \equiv 1$ near The Boundary}

For a bounded domain $\Omega$ with Lipschitz boundary and $\gamma \in Lip (\Omega)$, if $\Lambda_{\gamma}$ is known, we can recover the values  of $\gamma $
at the boundary $\partial{\Omega}$ (see \cite{Bro1} ). From this knowledge we can extend $\gamma $ to be 
a Lipchitz function  in $\mathbb{R}^n$ with $\gamma \equiv 1$  outside a large ball $B_{R_0}$ and 
$\gamma(x)\geq C_0 > 0$. Furthermore, If $|\nabla {log\gamma}(x)| < \varepsilon_{{n, \Omega}}$ we can keep this property holding in $\mathbb{R}^n$.
For a bounded Lipschitz domain $\Omega$  and $\gamma \in C^1 (\bar {\Omega})$,  if $\Lambda_{\gamma}$ is known, 
we can recover the values  of $\gamma $ and the gradient of $\gamma $ at the boundary $\partial{\Omega}$ (see the appendix). Next, 
Whitney's extension allows us from the information of $ \partial^\alpha \gamma(x) $ for
 all $ x \in \partial \Omega $ and all $ |\alpha| \leq 1 $  to
extend $\gamma $ to be $C^1$ in $\mathbb{R}^n$ with $\gamma \equiv 1$  outside a large ball $B_{R_0}$ and 
$\gamma(x)\geq C_0 > 0$(The readers are referred to see Chapter VI \S4.7 of \cite{St}). In the above two cases obviously $\gamma$ is known in $\mathbb{R}^n \backslash \Omega$.

Now for fixed $R>R_{0}$ such that $\Omega\subset B_{R}(0)$, we define the new Dirichlet-to-Neumann map as follows:
\[
\renewcommand{\arraystretch}{1.25}
\begin{array}{lll}
 \tilde{\Lambda}_{\gamma}:\ \ \ \  f$\textbar$_{\partial{B}_R(0)}\longrightarrow  \frac{\partial{u}_{f}}{\partial{\nu}}$\textbar$_{\partial{B}_R(0)},
\end{array}
\]
where $u_{f}$ is a solution to 
\[
\left\{
\renewcommand{\arraystretch}{1.25}
\begin{array}{lll}
\divergence(\gamma\nabla u)=0,\ \ \ \ \text{in}\ B_{R}(0),\\
 u$\textbar$_{\partial{B}_R(0)}=f.
\end{array}
\right.
\]

In the following, we will show $\tilde{\Lambda}_{\gamma}$ is determined by $\Lambda_{\gamma}$ and the value of $\gamma$ in $\bar{B}_{R}(0)\backslash \Omega$. Here 
we need to mention that Nachman already used this idea in \cite{Na2} and obtained an exact formula in the plane with the the conductivity $\gamma\in W^{2,p}, p>1$ (see
Proposition $6.1$ of \cite{Na2}). Following Nachman's idea, we will generalize this formula in a bounded Lipchitz domain $\Omega \subset \mathbb{R}^n, n\geq2, $ for a Lipchitz 
conductivity $\gamma$.

In fact, if we denote by $\Omega_{1}$, respectively $\Omega_{2}$, the domains $\Omega$ and $B_{R}(0)$, and by $\partial{\Omega_{1}}$, respectively
 $\partial{\Omega_{2}}$, the boundary of $\Omega$ and the boundary of $B_{R}(0)$, the Dirichlet-to-Neumann map corresponding to $\gamma$ in the domain 
$\Omega_{2}\backslash \bar{\Omega}_1$ can be viewed as $2\times2$ matrix of operators $\Lambda^{ij}, 
H^1({\partial{\Omega}_{i}})\rightarrow  H^{\frac{1}{2}}({\partial{\Omega}_{j}}), i,j=1,2.,$ defined as follows. Given $f_{j}\in H^1({\partial{\Omega}_{j}})$ 
for $j=1,2.,$ considering the Dirichlet problem
\[
\left\{
\renewcommand{\arraystretch}{1.25}
\begin{array}{lll}
\divergence(\gamma\nabla w)=0,\ \ \ \ \text{in}\ \Omega_{2}\backslash \bar{\Omega}_1 ,\\
 w$\textbar$_{\partial{\Omega}_1}=f_{1},\ \ w$\textbar$_{\partial{\Omega}_2}=f_{2},
\end{array}
\right.
\]
we have the following Dirichlet-to Neumann map:$H^1({\partial{\Omega}_{j}})\rightarrow  L^{2}({\partial{\Omega}_{j}})$ as follows:
\begin{equation}\label{1.1}
\begin{pmatrix} 
-\gamma \frac{\partial{w}}{\partial{\nu_{+}}}\arrowvert_{\partial{\Omega}_{1}}\\
\gamma \frac{\partial{w}}{\partial{\nu_{-}}}\arrowvert_{\partial{\Omega}_{2}}
\end{pmatrix} 
=
\begin{pmatrix}
\Lambda^{11} & \Lambda^{12}\\ 
\Lambda^{21} & \Lambda^{22}
 \end{pmatrix}
\begin{pmatrix}
 f_{1}\\
f_{2}
\end{pmatrix},
\end{equation}
where $ \frac{\partial{w}}{\partial{\nu_{+}}}\arrowvert_{\partial{\Omega}_{1}}$ denotes the
 nontangential limit of $\frac{\partial{w}}{\partial{\nu}}$ from outside of $\bar{\Omega}_{1}$ 
and $\frac{\partial{w}}{\partial{\nu_{-}}}\arrowvert_{\partial{\Omega}_{2}}$ denotes the nontangential 
limit of $\frac{\partial{w}}{\partial{\nu}}$ from inside of ${\Omega}_{2}$. Since $\gamma$ is known in $\bar\Omega_{2}\backslash {\Omega}_1$, 
$\Lambda^{ij}, i, j=1,2.,$ are determined. Now for any $f\in H^{1}(\partial{\Omega}_{2})$ we have a solution $u\in H^{\frac{3}{2}}(\Omega_{2})
\bigcap C^s(\Omega_{2})$, where $0\leq s <2$, satisfying 
\[
\left\{
\renewcommand{\arraystretch}{1.25}
\begin{array}{lll}
\divergence(\gamma\nabla u)=0,\ \ \ \ \text{in}\ \Omega_{2},\\
 u$\textbar$_{\partial{\Omega}_2}=f.
\end{array}
\right.
\]
Let $g=u\arrowvert_{\partial{\Omega}_1}\in H^1(\partial{\Omega}_1)$. We have that \eqref{1.1} implies 
\begin{equation}\label{1.2}
 \tilde{\Lambda}_{\gamma}=\gamma \frac{\partial{w}}{\partial{\nu_{-}}}\arrowvert_{\partial{\Omega}_{2}}=\Lambda^{21}g+\Lambda^{22}f.
\end{equation}
If $g$ can be recovered from $\Lambda^{ij},  \Lambda_{\gamma} \  \text{and} \  f$, then $\tilde{\Lambda}_{\gamma}$ is known. On the other hand, in 
view of $u\in H^{\frac{3}{2}}(\Omega_{2})
\bigcap C^s(\Omega_{2}),$ we can deduce from \eqref{1.1} 
\[
 \Lambda^{11}g+\Lambda^{12}f=-\gamma \frac{\partial{u}}{\partial{\nu_{+}}}\arrowvert_{\partial{\Omega}_{1}}=-\Lambda_{\gamma} g.
\]
Hence, it follows that 
\[
 (-\Lambda_{\gamma}-\Lambda^{11})g=\Lambda^{12}f.
\]
If we can show that $(-\Lambda_{\gamma}-\Lambda^{11})$ is an invertible operator: $H^1({\partial{\Omega}_{1}})\rightarrow  L^{2}({\partial{\Omega}_{1}})$, 
then we have 
\begin{equation}\label{1.3}
 g=(-\Lambda_{\gamma}-\Lambda^{11})^{-1}\Lambda^{12}f.
\end{equation}
Finally \eqref{1.2} and \eqref{1.3} imply
\begin{equation}\label{1.4}
  \tilde{\Lambda}_{\gamma}=\Lambda^{21}(-\Lambda_{\gamma}-\Lambda^{11})^{-1}\Lambda^{12}f+\Lambda^{22}f.
\end{equation}
Now we state the generalized  result of Nachman \cite{Na2} as follows.
\begin{theorem}\label{t1.1}
Let $\bar{\Omega}_{1}\subset \Omega_{2}$ be a bounded Lipschitz domain in $\mathbb{R}^{n}, n\geq 2$ and let $\gamma(x)\in Lip(\Omega_{2})$ with $\gamma(x)\geq C_{0}>0$.
Then $(-\Lambda_{\gamma}-\Lambda^{11})$  is an invertible operator: $H^1({\partial{\Omega}_{1}})\rightarrow  L^{2}({\partial{\Omega}_{1}})$ and 
$ \tilde{\Lambda}_{\gamma}=\Lambda^{21}(-\Lambda_{\gamma}-\Lambda^{11})^{-1}\Lambda^{12}+\Lambda^{22}.$
\end{theorem}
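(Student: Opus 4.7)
The formula $\tilde\Lambda_\gamma = \Lambda^{21}(-\Lambda_\gamma - \Lambda^{11})^{-1}\Lambda^{12} + \Lambda^{22}$ has been derived in the discussion preceding the theorem, conditional on the bijectivity of $T := -\Lambda_\gamma - \Lambda^{11} : H^1(\partial\Omega_1) \to L^2(\partial\Omega_1)$. The plan is therefore to prove injectivity and surjectivity of $T$ separately. The guiding idea is that $Tg$ measures the jump across $\partial\Omega_1$ of the conormal derivative of the function built by pasting together the $\gamma$-harmonic extension of $g$ into $\Omega_1$ with the $\gamma$-harmonic extension of $g$ into $\Omega_2 \setminus \bar\Omega_1$ that vanishes on $\partial\Omega_2$.

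For injectivity, suppose $Tg = 0$, and let $u_1$ be the $\gamma$-harmonic extension of $g$ into $\Omega_1$ and $w$ the $\gamma$-harmonic extension of $g$ into $\Omega_2 \setminus \bar\Omega_1$ with $w|_{\partial\Omega_2} = 0$. By the definitions of $\Lambda_\gamma$ and of $\Lambda^{11}$ in \eqref{1.1}, one has $\Lambda_\gamma g = \gamma \partial_{\nu_-} u_1$ and $\Lambda^{11} g = -\gamma \partial_{\nu_+} w$ with $\nu$ the outward normal of $\Omega_1$, so the hypothesis $Tg = 0$ reads $\gamma \partial_{\nu_-} u_1 = -\gamma \partial_{\nu_+} w$. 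The pasted function $U$, equal to $u_1$ on $\Omega_1$ and $w$ on $\Omega_2 \setminus \bar\Omega_1$, therefore has matching Dirichlet and conormal traces across $\partial\Omega_1$ and lies in $H^1_0(\Omega_2)$ as a weak solution of $\divergence(\gamma \nabla U) = 0$ in all of $\Omega_2$; uniqueness for the global Dirichlet problem (guaranteed by $\gamma \geq C_0 > 0$) then forces $U \equiv 0$, and hence $g = 0$.

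For surjectivity, given $h \in L^2(\partial\Omega_1)$ I would apply the Lax--Milgram theorem to the coercive bounded bilinear form $a(U,V) := \int_{\Omega_2} \gamma \nabla U \cdot \nabla V$ on $H^1_0(\Omega_2)$ against the linear functional $\ell(V) := -\int_{\partial\Omega_1} h\, V\, dS$, which is bounded because the trace of $V$ on $\partial\Omega_1$ lies in $H^{1/2} \hookrightarrow L^2$. Integration by parts separately on $\Omega_1$ and on $\Omega_2 \setminus \bar\Omega_1$ shows that the resulting $U$ is $\gamma$-harmonic on each side of $\partial\Omega_1$ and satisfies the conormal jump $\gamma \partial_{\nu_+} U - \gamma \partial_{\nu_-} U = h$; identifying the two pieces of $U$ with the unique solutions appearing in the definitions of $\Lambda_\gamma$ and $\Lambda^{11}$ then gives $T(U|_{\partial\Omega_1}) = h$.

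The main obstacle is that Lax--Milgram only places $U$ in $H^1_0(\Omega_2)$, which produces $U|_{\partial\Omega_1} \in H^{1/2}(\partial\Omega_1)$, whereas the theorem demands the preimage to lie in $H^1(\partial\Omega_1)$. Bridging this gap requires upgrading $U$ to $H^{3/2}$ up to $\partial\Omega_1$ from each side, which is precisely where the Lipschitz regularity of $\gamma$ and of $\partial\Omega_1$ becomes essential: the $L^2$-theory of the Dirichlet problem on Lipschitz domains with Lipschitz coefficients (the framework exploited by Brown \cite{Bro1} for the boundary determination and by Nachman \cite{Na2}) sends $H^1$ boundary data to solutions whose nontangential gradient lies in $L^2$, and applying it to each side of the transmission problem together with the $L^2$ flux jump yields $g := U|_{\partial\Omega_1} \in H^1(\partial\Omega_1)$. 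Combined with the injectivity above, this gives invertibility of $T$, and the displayed formula for $\tilde\Lambda_\gamma$ follows immediately from \eqref{1.4}.
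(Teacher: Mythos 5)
Your injectivity argument is sound, and is in fact a more elementary route than the paper's (the paper never argues on $T=-\Lambda_\gamma-\Lambda^{11}$ directly; it builds the explicit inverse as the single layer potential $S$ of the Green function of $\Omega_2$ and proves $S(\Lambda_\gamma+\Lambda^{11})g=g$ by Green's formula, then gets injectivity of $S$ from the jump relation plus the maximum principle). One small slip: with the conventions of \eqref{1.1}, $Tg=0$ gives $\gamma\partial_{\nu_-}u_1=+\gamma\partial_{\nu_+}w$ (same normal $\nu$, limits from the two sides), not $\gamma\partial_{\nu_-}u_1=-\gamma\partial_{\nu_+}w$; the matching of conormal traces that you then use for the pasting is the correct consequence, so this is only a sign typo.

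The genuine gap is in the surjectivity step, exactly at the point you flag as ``the main obstacle.'' The theorem you invoke to bridge it --- the regularity problem, i.e.\ $H^1(\partial\Omega)$ Dirichlet data implies $(\nabla u)^*\in L^2(\partial\Omega)$ (Proposition \ref{p1.3}) --- runs in the wrong direction: it takes $H^1$ boundary data as its \emph{hypothesis}, which is precisely what you are trying to prove for $g=U|_{\partial\Omega_1}$. After Lax--Milgram you only know $g\in H^{1/2}(\partial\Omega_1)$, and the flux information you hold is that the \emph{difference} of the two one-sided conormal derivatives equals $h\in L^2(\partial\Omega_1)$, each one individually being a priori only in $H^{-1/2}(\partial\Omega_1)$; ``applying the Dirichlet theory to each side together with the $L^2$ flux jump'' does not produce $\nabla_t g\in L^2$. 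Converting $L^2$ Neumann/transmission data into an $H^1$ Dirichlet trace requires the $L^2$ Neumann or layer-potential theory for $\divergence(\gamma\nabla\cdot)$ on Lipschitz domains (Rellich-type estimates), which is exactly the machinery the paper imports from Mitrea--Taylor: boundedness of $S:L^2(\partial\Omega_1)\to H^1(\partial\Omega_1)$ together with the jump relation $\partial_{\nu_-}Sf-\partial_{\nu_+}Sf=f/\gamma$ (Proposition \ref{p1.2}). The natural repair within your scheme is to note that your Lax--Milgram solution $U$ coincides, by uniqueness, with the single layer potential of $\pm h$ (both vanish on $\partial\Omega_2$, are $\gamma$-harmonic off $\partial\Omega_1$, and have the same conormal jump), and then the missing regularity of $g$ is exactly the Mitrea--Taylor mapping property --- but at that point you have reconstructed the paper's argument rather than bypassed it. Note also that $\Lambda_\gamma$ and $\Lambda^{11}$ as operators into $L^2(\partial\Omega_1)$ are defined via nontangential limits, so even the identity $Tg=h$ cannot be read in $L^2$ until $g\in H^1(\partial\Omega_1)$ is established.
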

Before proving Theorem \ref{t1.1}, we first introduce some known results. Let $G(x,y)$ be the Green function such that, for $x\in\Omega_{2}$, 
\begin{equation}\label{1.5}
 \left\{
\renewcommand{\arraystretch}{1.25}
\begin{array}{lll}
\divergence(\gamma\nabla G(x,y))=\delta_{x},\ \ \ \ \text{in}\ \Omega_{2},\\
 G(x,y)$\textbar$_{\partial{\Omega}_2}=0.
\end{array}
\right.
\end{equation}
Now we define the single layer potential and double layer potential as
\begin{equation}\label{1.6}
  Sf(x)=\displaystyle\int_{\partial{\Omega}_{1}} G(x,y)f(y)dS(y), \quad x\in \bar{\Omega}_{2},
\end{equation}
\begin{equation}\label{1.7}
 Df(x) =\displaystyle\int_{\partial{\Omega}_{1}} \frac{\partial{G(x,y)}}{\partial{\nu(y)}}f(y)dS(y), \quad x\in \bar{\Omega}_{2}.
\end{equation}
For the single potential $Sf(x)$ and the double layer potential $Df(x)$, we collect the following results from Mitrea and Taylor's paper \cite{MT1}
(see Proposition 1.6,  Theorem $3.7$, Proposition $3.8$ and Proposition $8.2$ of \cite{MT1}).

\begin{proposition}\label{p1.2}

If $\Omega_1$ is a bounded Lipschitz domain in $\mathbb{R}^n, n \geq 2$ and $f\in L^2(\partial{\Omega}_{1})$, then
the functions $Sf(x)$ and $Df(x)$ have the following properties:\\

(a)\ The following estimates hold
\[
 \|Df(x)\|_{L^2(\partial{\Omega}_{1})}\leq C \|f\|_{L^2(\partial{\Omega}_{1})},
\]
\[
  \|Sf(x)\|_{H^1(\partial{\Omega}_{1})}\leq C \|f\|_{L^2(\partial{\Omega}_{1})}.
\]

(b)\  $ \divergence(\gamma\nabla Sf(x))=0, x \in {\Omega}_{2}\backslash{\partial{\Omega}_{1}}$.\\

(c)\ $Sf(x)\in H^1({\Omega}_{2}-\bar{\Omega}_{1})$ and $Sf(x)\in H^1({\Omega}_{1})$. The boundary value $Sf_{+}(x)(Sf_{-}(x))$ of $Sf(x)$
from outside (respectively inside)  ${\Omega}_{1}$ are identical as elements of $H^1(\partial{\Omega}_{1})$ and agree with $Sf(x)\arrowvert_{\partial{\Omega}_{1}}$.\\

(d)\ The (nontangential) limits $\frac{\partial{Sf(x)}}{\partial{\nu}_{+}}(\frac{\partial{Sf(x)}}{\partial{\nu}_{-}})$ as the boundary is approached from 
outside (respectively inside) ${\Omega}_{1}$ are given by the formula 
\[
 \frac{\partial{Sf(x)}}{\partial{\nu}^{+}_{-}}={}^{-}_{+}\frac{1}{2}\frac{1}{\gamma(x)}f(x)+B^{*}f(x), \quad \text{ almost everywhere } x\in \partial{\Omega}_{1},
\]
where
\[
B^{*}f(x)=p.v.\displaystyle\int_{\partial{\Omega}_{1}} \frac{\partial{G(x,y)}}{\partial{\nu(x)}}f(y)dS(y).
\]
In particular 
\[
\frac{\partial{Sf(x)}}{\partial{\nu}_{-}}-\frac{\partial{Sf(x)}}{\partial{\nu}_{+}}=\frac{1}{\gamma(x)}f(x).
\]
 
(e)\ The nontangential limits $Df_{+}(x)(Df_{-}(x))$ of $Df(x)$ as we approach the boundary from outside (respectively inside)  ${\Omega}_{1}$ 
exist and satisfy 

\[
 Df_{{}^{+}_{-}}={}^{+}_{-}\frac{1}{2}\frac{1}{\gamma(x)}f(x)+Bf(x), \quad \text{ almost everywhere } x\in \partial{\Omega}_{1},
\]
where
\[
Bf(x)=p.v.\displaystyle\int_{\partial{\Omega}_{1}} \frac{\partial{G(x,y)}}{\partial{\nu(y)}}f(y)dS(y).
\]

\end{proposition}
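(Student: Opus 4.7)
The plan is to establish these classical layer-potential properties by combining the Littman-Stampacchia-Weinberger construction of the Green's function for $\mathrm{div}(\gamma\nabla\cdot)$ with Dahlberg-Verchota-type harmonic analysis on Lipschitz domains. The starting point is to record the basic estimates on $G(x,y)$: for Lipschitz $\gamma$ bounded away from zero, De Giorgi-Nash-Moser theory yields H\"older continuity of $G$ up to $\partial\Omega_2$ together with the pointwise bounds $|G(x,y)| \lesssim |x-y|^{2-n}$ and $|\nabla_{x}G(x,y)| \lesssim |x-y|^{1-n}$, and similarly in $y$. I would then fix a freezing decomposition $G(x,y) = \Gamma_{y_{0}}(x,y) + R(x,y)$, where $\Gamma_{y_{0}}$ is the fundamental solution of the constant coefficient operator $\gamma(y_{0})\Delta$ and $R$ is one order less singular than $\Gamma_{y_{0}}$.

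For part (a), I would reduce the $L^{2}$-boundedness of $D$ to the $L^{2}$-boundedness of Cauchy-type singular integrals on the Lipschitz graph $\partial\Omega_{1}$ via the freezing decomposition plus the Coifman-McIntosh-Meyer theorem; the $R$-piece produces a weakly singular operator, bounded by Schur's lemma. The $H^{1}(\partial\Omega_{1})$ estimate for $Sf$ reduces by Verchota's strategy to $L^{2}$-bounds on the tangential derivatives of $Sf|_{\partial\Omega_{1}}$, which are again controlled by Cauchy-type kernels involving $\partial_{\tau}G(x,y)$. Part (b) is immediate from differentiation under the integral sign, since $\mathrm{div}_{x}(\gamma(x)\nabla_{x}G(x,y))=\delta_{y}(x)$ is supported in $\partial\Omega_{1}$ only when $y\in\partial\Omega_{1}$, while $x\in\Omega_{2}\setminus\partial\Omega_{1}$. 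Part (c) is the continuity of $Sf$ across $\partial\Omega_{1}$: the weak singularity of $G$ makes $Sf$ continuous in $\Omega_{2}$ and the Sobolev regularity of $G(\cdot,y)$ gives $Sf\in H^{1}$ separately on each side of $\partial\Omega_{1}$ with matching traces equal to $\int_{\partial\Omega_{1}}G(x,\cdot)f\,dS$ evaluated at $x\in\partial\Omega_{1}$.

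The main obstacle is establishing the nontangential jump formulas (d) and (e). Here I would again use the freezing decomposition: at a Lebesgue point $x_{0}\in\partial\Omega_{1}$, the contribution of $\Gamma_{x_{0}}$ to $Df$ and to $\partial_{\nu}Sf$ produces, by the classical jump relations for the Laplacian rescaled to the operator $\gamma(x_{0})\Delta$, the prescribed $\pm\tfrac{1}{2\gamma(x_{0})}f(x_{0})$ jump, with principal value $Bf(x_{0})$ or $B^{*}f(x_{0})$. For the remainder $R(x,y)$ and its first derivatives one has improved size bounds that make the corresponding layer potentials continuous across $\partial\Omega_{1}$ in nontangential sense; the limits from both sides therefore coincide and merge into the principal value. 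The hard technical step is to justify the nontangential almost-everywhere convergence for the Calder\'on-Zygmund operators arising from the $\Gamma_{x_{0}}$ piece on a Lipschitz graph, and to control the remainder $R$ without a pointwise Hessian bound on $G$; this is handled by Verchota's nontangential maximal function approach together with $L^{p}$-weak estimates for second derivatives of $G$. The last identity in (d) then follows by subtracting the $\pm$ relations.
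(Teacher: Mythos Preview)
Your outline is correct, but note that the paper does not actually give a proof of this proposition at all: it is stated as a collection of results imported from Mitrea and Taylor \cite{MT1} (specifically Proposition~1.6, Theorem~3.7, Proposition~3.8 and Proposition~8.2 there), with only a remark that their arguments, written for $-\Delta+V$ on a manifold with $C^1$ metric, go through verbatim for $\divergence(\gamma\nabla\,\cdot\,)$ with Lipschitz $\gamma$ on a bounded domain. Your freezing-coefficient decomposition combined with Coifman--McIntosh--Meyer and Verchota's nontangential maximal function machinery is precisely the engine that drives the Mitrea--Taylor proof, so rather than taking a different route you have effectively reconstructed the cited argument. The one place where your sketch is a bit optimistic is the remainder $R$: for merely Lipschitz $\gamma$ the gain on $\nabla R$ is not a full order in the pointwise sense, and the nontangential control there requires the full layer-potential apparatus (as in \cite{MT1} or the Kenig--Pipher variable-coefficient theory) rather than a direct Schur-type bound; but this is exactly what the cited reference supplies.
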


\begin{remark}
 Our case is a little bit different from  the case of the paper \cite{MT1}. In \cite{MT1}
they dealt with the operator $ L=-\triangle+V(x) (V(x) \in L^{\infty}(M), V\geq 0, 
\text{not identical zero.})$ in a compact, connected smooth $M $ with $C^1$ metric tensor. But if we just want to get the above Proposition \ref{p1.2}, we can reproduce the steps of the proof of  Theorem $3.7$ and Proposition $3.8$ to 
show that the above Proposition \ref{p1.2} still holds for the operator $L=\divergence(\gamma\nabla u)$ in a bounded  domain 
in ${\Omega}_{2}$ with Lipchitz conductivity $\gamma$.
\end{remark}

We know that the Dirichlet-to Neumann map $\Lambda_{\gamma}$ is a bounded
 operator:$H^{\frac{1}{2}}({\partial{\Omega}_{1}})\rightarrow  H^{-\frac{1}{2}}({\partial{\Omega}_{1}})$. In fact, when ${\Omega}_{1}$ is  a  bounded Lipschitz domain 
and $\gamma(x)\in Lip ({\Omega}_{1})$,   the Dirichlet-to Neumann map $\Lambda_{\gamma}$ is a bounded operator :$H^1({\partial{\Omega}_{1}})
\rightarrow  L^{2}({\partial{\Omega}_{1}})$(the same argument 
for $\tilde{\Lambda}_{\gamma}, \Lambda^{i,j}, i, j=1, 2.$).  This result  follows from the  following Proposition \ref{p1.3}  and Proposition \ref{p1.3} can be deduced 
from Proposition $7.4$ and Proposition $8.2$ of the paper \cite{MT1}. 

\begin{proposition}\label{p1.3}
Suppose $\Omega$ is a bounded domain in $\mathbb{R}^n, n\geq 2$ with Lipchitz boundary and $\gamma(x)\in Lip ({\Omega})$ 
with $\gamma(x)\geq C_0>0$. Let $u(x)\in H^1(\Omega)$ be
 a solution to
\[
\left\{
\renewcommand{\arraystretch}{1.25}
\begin{array}{lll}
\divergence(\gamma\nabla u)=0,\ \ \ \ \text{in}\ \Omega,\\
 u$\textbar$_{\partial{\Omega}}=g\in H^1(\partial{\Omega}).
\end{array}
\right.
\] 
Then we have 
\[
 \|\frac{\partial u}{\partial \nu}\|_{L^2(\partial{\Omega})}\leq C \|g\|_{H^1(\partial{\Omega})}
\]
and 
\[
 \|(\nabla u)^{*}\|_{L^2(\partial{\Omega})}\leq C \|g\|_{H^1(\partial{\Omega})},
\]
where $(\nabla u)^{*}$ is the nontangential maximal fuction of $\nabla u$.
\end{proposition}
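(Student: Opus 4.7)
The plan is to represent the solution $u$ as a single layer potential over $\partial\Omega$ and then transfer all three bounds (the $L^2$ estimate for the normal derivative, the $L^2$ estimate for the nontangential maximal function, and the $L^2 \to H^1$ mapping property of $S$) to $u$ via the jump relations and mapping properties collected in Proposition \ref{p1.2}. The substantive inputs come from \cite{MT1}: Proposition $7.4$ provides invertibility of the single layer operator $S: L^2(\partial\Omega) \to H^1(\partial\Omega)$, and Proposition $8.2$ provides the nontangential maximal function bound $\|(\nabla S\phi)^\ast\|_{L^2(\partial\Omega)} \le C\|\phi\|_{L^2(\partial\Omega)}$. As noted in the remark following Proposition \ref{p1.2}, those arguments go through for the divergence form operator $\divergence(\gamma\nabla\cdot)$ with $\gamma$ Lipschitz and bounded below by $C_0$.

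First I would embed $\Omega$ in a slightly larger Lipschitz domain $\Omega'$ so that the Green function $G(x,y)$ for $\divergence(\gamma\nabla\cdot)$ with zero Dirichlet data on $\partial\Omega'$ is available, and I would form $S$ and $B^\ast$ as in \eqref{1.6} and Proposition \ref{p1.2}(d) using this $G$. By Proposition $7.4$ of \cite{MT1}, $S: L^2(\partial\Omega) \to H^1(\partial\Omega)$ is bijective; set $\phi = S^{-1}g$, so that $\|\phi\|_{L^2(\partial\Omega)} \le C\|g\|_{H^1(\partial\Omega)}$. Let $v = S\phi$ on $\Omega$. By Proposition \ref{p1.2}(b)--(c), $v \in H^1(\Omega)$, $\divergence(\gamma\nabla v) = 0$ in $\Omega$, and $v|_{\partial\Omega} = S\phi = g$. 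Uniqueness of the $H^1$ Dirichlet problem (Lax--Milgram applied to the energy form $\int_\Omega \gamma|\nabla u|^2\,dx$, which is coercive since $\gamma \ge C_0$) then yields $u = S\phi$ on $\Omega$.

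With the representation in hand, the normal derivative bound follows from the interior jump formula of Proposition \ref{p1.2}(d):
\[
\frac{\partial u}{\partial \nu}\bigg|_{\partial\Omega} = \frac{1}{2\gamma(x)}\phi(x) + B^\ast\phi(x) \quad \text{a.e.\ on } \partial\Omega.
\]
Combining $\gamma \ge C_0$ with the $L^2$ boundedness of $B^\ast$ (which is part of the proof of Theorem $3.7$ of \cite{MT1}, of the same character as the bounds in Proposition \ref{p1.2}(a)), one gets $\|\partial u/\partial\nu\|_{L^2(\partial\Omega)} \le C\|\phi\|_{L^2(\partial\Omega)} \le C\|g\|_{H^1(\partial\Omega)}$. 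For the nontangential maximal function estimate, since $u = S\phi$ in $\Omega$, Proposition $8.2$ of \cite{MT1} gives directly $\|(\nabla u)^\ast\|_{L^2(\partial\Omega)} = \|(\nabla S\phi)^\ast\|_{L^2(\partial\Omega)} \le C\|\phi\|_{L^2(\partial\Omega)} \le C\|g\|_{H^1(\partial\Omega)}$.

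The main obstacle is really hidden upstream: the $L^2$-invertibility of $S$ and the $L^2$ bound on $(\nabla S\phi)^\ast$ are the deep parts, relying on Rellich-type identities and the theory of Lipschitz domain layer potentials, and it is essential for us that these survive the replacement of $-\Delta + V$ by $\divergence(\gamma\nabla\cdot)$ with merely Lipschitz $\gamma \ge C_0$. This is exactly what the remark following Proposition \ref{p1.2} asserts by re-running the Mitrea--Taylor arguments; once accepted, the deduction of Proposition \ref{p1.3} is the short assembly above.
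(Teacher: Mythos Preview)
Your proposal is correct and follows exactly the route the paper indicates: the paper does not give a detailed proof but simply states that Proposition~\ref{p1.3} ``can be deduced from Proposition~$7.4$ and Proposition~$8.2$ of the paper~\cite{MT1},'' and your argument is precisely the natural assembly of those two ingredients via the single-layer representation $u=S\phi$ with $\phi=S^{-1}g$.
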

With the above preliminary works in hand we are now in the position to show an identity  about $\Lambda_{\gamma}+\Lambda_{11}$.

\begin{lemma}\label{l1.5}
 For any $g\in H^1(\partial{\Omega}_{1})$, we have the identity 
\[
S(\Lambda_{\gamma}+\Lambda_{11})g\arrowvert_{\partial{\Omega}_{1}}=g\arrowvert_{\partial{\Omega}_{1}}.
\]
\end{lemma}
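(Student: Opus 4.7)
The plan is to realize $g$ as the $\partial\Omega_1$-trace of a globally defined function on $\Omega_2$ whose conormal derivative jumps across $\partial\Omega_1$ by exactly $(\Lambda_{\gamma}+\Lambda^{11})g$ and whose boundary value on $\partial\Omega_2$ is zero; the Green's function representation then identifies this function with the single layer potential of $(\Lambda_\gamma+\Lambda^{11})g$, and restricting to $\partial\Omega_1$ yields the identity.

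Concretely, given $g \in H^1(\partial\Omega_1)$, let $u$ solve $\divergence(\gamma\nabla u)=0$ in $\Omega_1$ with $u|_{\partial\Omega_1}=g$, and let $w$ be the transmission solution of Section 2 in $\Omega_2\setminus\bar\Omega_1$ with $w|_{\partial\Omega_1}=g$ and $w|_{\partial\Omega_2}=0$. Glue them into $\tilde u\in H^1(\Omega_2)$ by setting $\tilde u=u$ on $\Omega_1$ and $\tilde u=w$ on $\Omega_2\setminus\bar\Omega_1$; the two traces on $\partial\Omega_1$ coincide, so $\tilde u$ is indeed in $H^1$. Testing against $\phi\in C^\infty_c(\Omega_2)$ and integrating by parts on each piece (the bulk terms vanish since both $u$ and $w$ are solutions), one obtains
$$\int_{\Omega_2}\gamma\nabla\tilde u\cdot\nabla\phi\,dx=\int_{\partial\Omega_1}\phi\Bigl(\gamma\tfrac{\partial u}{\partial\nu_-}-\gamma\tfrac{\partial w}{\partial\nu_+}\Bigr)\,dS=\int_{\partial\Omega_1}\phi\,(\Lambda_\gamma+\Lambda^{11})g\,dS,$$
by \eqref{1.1} with $f_1=g,\,f_2=0$. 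Equivalently, $-\divergence(\gamma\nabla\tilde u)=(\Lambda_\gamma+\Lambda^{11})g\cdot\delta_{\partial\Omega_1}$ in $\mathcal{D}'(\Omega_2)$ and $\tilde u|_{\partial\Omega_2}=0$.

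Set $f:=(\Lambda_\gamma+\Lambda^{11})g$, which lies in $L^2(\partial\Omega_1)$ by Proposition \ref{p1.3}. By Proposition \ref{p1.2}(b),(c),(d), $Sf\in H^1(\Omega_2)$ is continuous across $\partial\Omega_1$, vanishes on $\partial\Omega_2$ (because $G(\cdot,y)|_{\partial\Omega_2}=0$), and has conormal jump $\gamma\partial Sf/\partial\nu_--\gamma\partial Sf/\partial\nu_+=f$ on $\partial\Omega_1$; repeating the computation above with $Sf$ in place of $\tilde u$ shows that $Sf$ satisfies exactly the same distributional equation with the same Dirichlet data on $\partial\Omega_2$. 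Hence $\tilde u-Sf\in H^1_0(\Omega_2)$ weakly solves $\divergence(\gamma\nabla(\tilde u-Sf))=0$, and the energy identity together with $\gamma\ge C_0>0$ forces $\tilde u\equiv Sf$. Taking traces on $\partial\Omega_1$ gives $g=\tilde u|_{\partial\Omega_1}=Sf|_{\partial\Omega_1}=S(\Lambda_\gamma+\Lambda^{11})g|_{\partial\Omega_1}$.

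The main technical point is justifying the boundary-integral manipulations when $\gamma$ and $\partial\Omega_1$ are only Lipschitz: the conormal derivatives must be interpreted as the nontangential limits appearing in \eqref{1.1} and in Proposition \ref{p1.2}(d), and the integration by parts for $w$ in $\Omega_2\setminus\bar\Omega_1$ must use Proposition \ref{p1.3} to make sense of $\gamma\partial w/\partial\nu_+$ as an $L^2$-datum on $\partial\Omega_1$ paired against the $H^1$-trace of $\phi$. Once these traces are handled in the Mitrea--Taylor framework already quoted, the rest of the argument is just uniqueness for the Dirichlet problem with a single-layer source.
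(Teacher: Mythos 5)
Your argument is correct, but it follows a genuinely different route from the paper. The paper proves the identity by writing the Green representation formula twice -- once for the exterior solution $w$ in $\Omega_2\setminus\bar\Omega_1$ and once for the interior solution $u$ in $\Omega_1$ -- then letting $x\to\partial\Omega_1$ nontangentially and invoking the \emph{double layer} jump relations of Proposition \ref{p1.2}(e); adding the two resulting boundary identities makes the double layer terms and the $\tfrac12 g$ contributions combine to leave exactly $g=S(\Lambda_\gamma+\Lambda^{11})g$, with a final remark about excising a small ball to handle the singularity of $\divergence(\gamma\nabla G(x,y))$. You instead glue $u$ and $w$ into a single $\tilde u\in H^1(\Omega_2)$, read off from the weak formulation that its conormal jump across $\partial\Omega_1$ is $(\Lambda_\gamma+\Lambda^{11})g$ (which is essentially definitional, given \eqref{1.1} and the weak definition of $\Lambda_\gamma$), match this against the \emph{single layer} jump relation of Proposition \ref{p1.2}(d), and conclude by energy uniqueness for $H^1_0(\Omega_2)$ solutions of $\divergence(\gamma\nabla v)=0$. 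Both proofs rest on the same Mitrea--Taylor layer-potential facts (Propositions \ref{p1.2} and \ref{p1.3}); your transmission-plus-uniqueness argument bypasses the paper's pointwise limiting procedure and the regularization of the singular kernel, while the paper's computation is more explicit and does not need the global gluing step. One small point to make explicit in your version: the claim $Sf|_{\partial\Omega_2}=0$ uses $G(x,y)=0$ for $x\in\partial\Omega_2$, $y\in\partial\Omega_1$, which requires the symmetry $G(x,y)=G(y,x)$ of the Green function for the symmetric operator $\divergence(\gamma\nabla\cdot)$ (the paper relies on the same fact in the proof of Lemma \ref{1.6}), and you should also note, as you do, that $Sf\in H^1(\Omega_2)$ follows from Proposition \ref{p1.2}(c) since the interior and exterior traces on $\partial\Omega_1$ agree.
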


\begin{proof}
For any $g\in H^1(\partial{\Omega}_{1})$ we can find a solution $w\in H^1({\Omega}_{2}\backslash\bar{\Omega}_{1})$ satisfying 
\[
\left\{
\renewcommand{\arraystretch}{1.25}
\begin{array}{lll}
\divergence(\gamma\nabla w)=0,\ \ \ \ \text{in}\ \  {\Omega}_{2}\backslash\bar{\Omega}_{1} ,\\
 w$\textbar$_{\partial{\Omega}_{1}}=g, \quad  w$\textbar$_{\partial{\Omega}_{2}}=0.
\end{array}
\right.
\] 

Since $\gamma(x)\in Lip ({\Omega_{2}})$ the local regularity of the uniformly elliptic equation implies that $w\in H^2_{loc}({\Omega}_{2}\backslash\bar{\Omega}_{1})$
and  Proposition \ref{p1.3} implies that $(\nabla w)^{*}\in L^2(\partial({\Omega}_{2}\backslash\bar{\Omega}_{1}))$. Therefore we can still use  Green's formula  for 
 $x\in {\Omega}_{2}\backslash\bar{\Omega}_{1}$, 
\begin{equation}\label{1.8}
\renewcommand{\arraystretch}{1.25}
\begin{array}{lll}
 w(x)&=\displaystyle\int_{{\Omega}_{2}\backslash\bar{\Omega}_{1}}\divergence(\gamma\nabla w) G(x, y)dy-
\displaystyle\int_{{\Omega}_{2}\backslash\bar{\Omega}_{1}} w \divergence(\gamma\nabla G(x, y))  dy\\
&=\displaystyle\int_{\partial{\Omega}_{2}}G(x, y)\gamma\frac{\partial w}{\partial{\nu}}dS(y)-
\displaystyle\int_{\partial{\Omega}_{2}}\frac{\partial G(x, y)}{\partial{\nu(y)}}\gamma w dS(y)\\
&+\displaystyle\int_{\partial{\Omega}_{1}}G(x, y)\Lambda^{11} g dS(y)+
\displaystyle\int_{\partial{\Omega}_{1}}\frac{\partial G(x, y)}{\partial{\nu(y)}}\gamma w dS(y).
\end{array}
\end{equation}
Since $G(x,y)=0, y\in \partial{\Omega}_{2}$ and $w\arrowvert_{\partial{\Omega}_{2}}=0$, \eqref{1.8} implies
\begin{equation}\label{1.9}
  w(x)=\displaystyle\int_{\partial{\Omega}_{1}}G(x, y)\Lambda^{11} g dS(y)+
\displaystyle\int_{\partial{\Omega}_{1}}\frac{\partial G(x, y)}{\partial{\nu(y)}}\gamma w dS(y).
\end{equation}
Let $x\rightarrow \partial{\Omega}_{1}$ in \eqref{1.9} and Proposition \ref{p1.2} implies
\begin{equation}\label{1.10}
  g(x)=\displaystyle\int_{\partial{\Omega}_{1}}G(x, y)\Lambda^{11} g dS(y)+ \frac{1}{2} g(x)+
\displaystyle\int_{\partial{\Omega}_{1}}\frac{\partial G(x, y)}{\partial{\nu(y)}}\gamma g dS(y), a.e., x\in \partial{\Omega}_{1}.
\end{equation}
On the other hand we can find a solution $u$ to
\[
\left\{
\renewcommand{\arraystretch}{1.25}
\begin{array}{lll}
\divergence(\gamma\nabla u)=0,\ \ \ \ \text{in}\ \ {\Omega}_{1} ,\\
 u$\textbar$_{\partial{\Omega}_{1}}=g.
\end{array}
\right.
\] 
Using Green's formula again we know for  $x\in \Omega_{1}$

\begin{equation}\label{1.11}
\renewcommand{\arraystretch}{1.25}
\begin{array}{lll}
 u(x)&=\displaystyle\int_{{\Omega}_{1}}\divergence(\gamma\nabla u) G(x, y)dy-
\displaystyle\int_{{\Omega}_{1}} u \divergence(\gamma\nabla G(x, y))  dy\\
&=\displaystyle\int_{\partial{\Omega}_{1}}G(x, y)\Lambda_{\gamma} g dS(y)-
\displaystyle\int_{\partial{\Omega}_{1}}\frac{\partial G(x, y)}{\partial{\nu(y)}}\gamma u dS(y).
\end{array}
\end{equation}
Let $x\rightarrow \partial{\Omega}_{1}$ in \eqref{1.11} and Proposition \ref{p1.2}  gives 

\begin{equation}\label{1.12}
g(x)=\displaystyle\int_{\partial{\Omega}_{1}}G(x, y) \Lambda_{\gamma} g dS(y)+ \frac{1}{2} g(x)-
\displaystyle\int_{\partial{\Omega}_{1}}\frac{\partial G(x, y)}{\partial{\nu(y)}}\gamma g dS(y), a.e., x\in \partial{\Omega}_{1}.
 \end{equation}
Finally \eqref{1.10} $+$ \eqref{1.12} imply
\[
 g(x)=\displaystyle\int_{\partial{\Omega}_{1}}G(x, y) (\Lambda^{11}+\Lambda_{\gamma}) g dS(y),
\]
which means 
\[
S(\Lambda_{\gamma}+\Lambda_{11})g\arrowvert_{\partial{\Omega}_{1}}=g\arrowvert_{\partial{\Omega}_{1}}.
\]
At last we need to point out that the above proof is just a formal deduction due to the singularity of $\divergence(\gamma\nabla G(x, y))$. To overcome this 
obstacle we can deal with the integrations in a domain in which we remove a small ball $B_\varepsilon(x)$ centered at the singular point $x$. Finally letting $\varepsilon\rightarrow 0$ we get the above identity. 
\end{proof}
Now we will show that $\Lambda_{\gamma}+\Lambda_{11}$ is an invertible operator: $H^1({\partial{\Omega}_{1}})\rightarrow L^2({\partial{\Omega}_{1}}) $. From this Theorem \ref{t1.1} follows.

\begin{lemma}\label{1.6}
  $\Lambda_{\gamma}+\Lambda_{11}$ is an invertible operator: $H^1({\partial{\Omega}_{1}})\rightarrow L^2({\partial{\Omega}_{1}}) $ and the invertible operator 
is $Sf\arrowvert_{\partial{\Omega}_{1}}$ for any $f\in L^2(\partial{\Omega}_{1})$.
\end{lemma}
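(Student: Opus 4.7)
The plan is to show that the candidate inverse $f \mapsto Sf|_{\partial\Omega_1}$ is a two--sided inverse of $\Lambda_\gamma + \Lambda^{11}$, using Lemma \ref{l1.5} for one direction and the jump relations of Proposition \ref{p1.2}(d) for the other. First I recall that $S\colon L^2(\partial\Omega_1)\to H^1(\partial\Omega_1)$ is bounded by Proposition \ref{p1.2}(a), and $\Lambda_\gamma+\Lambda^{11}\colon H^1(\partial\Omega_1)\to L^2(\partial\Omega_1)$ is bounded by Proposition \ref{p1.3}; thus the compositions make sense. Lemma \ref{l1.5} already gives $S(\Lambda_\gamma+\Lambda^{11})g = g$ on $H^1(\partial\Omega_1)$, which yields injectivity of $\Lambda_\gamma+\Lambda^{11}$ and exhibits $S$ as a left inverse. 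So the task reduces to checking the right inverse identity $(\Lambda_\gamma+\Lambda^{11})Sf = f$ for $f\in L^2(\partial\Omega_1)$.

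For this, fix $f\in L^2(\partial\Omega_1)$ and set $u(x)=Sf(x)$ for $x\in\Omega_2$. By Proposition \ref{p1.2}(b)(c), $u\in H^1(\Omega_1)\cap H^1(\Omega_2\setminus\bar\Omega_1)$, $\divergence(\gamma\nabla u)=0$ on each side of $\partial\Omega_1$, and the interior/exterior traces on $\partial\Omega_1$ agree with $Sf|_{\partial\Omega_1}$. Since $G(\,\cdot\,,y)$ vanishes on $\partial\Omega_2$, one also has $u|_{\partial\Omega_2}=0$. By uniqueness of the Dirichlet problem, $u|_{\Omega_1}$ is the solution whose normal derivative defines $\Lambda_\gamma(Sf|_{\partial\Omega_1})$, and $u|_{\Omega_2\setminus\bar\Omega_1}$ is the solution with zero data on $\partial\Omega_2$ and data $Sf|_{\partial\Omega_1}$ on $\partial\Omega_1$, whose normal derivative from outside defines $\Lambda^{11}(Sf|_{\partial\Omega_1})$.

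Now I apply the jump formula of Proposition \ref{p1.2}(d) on each side. From inside $\Omega_1$,
\[
\Lambda_\gamma(Sf|_{\partial\Omega_1}) \;=\; \gamma\,\tfrac{\partial Sf}{\partial\nu_-}\Big|_{\partial\Omega_1} \;=\; \tfrac{1}{2}f+\gamma B^{\ast}f,
\]
while from the exterior side, taking into account the sign built into the definition \eqref{1.1} of $\Lambda^{11}$ (the outward normal of $\Omega_2\setminus\bar\Omega_1$ on $\partial\Omega_1$ is $-\nu$),
\[
\Lambda^{11}(Sf|_{\partial\Omega_1}) \;=\; -\gamma\,\tfrac{\partial Sf}{\partial\nu_+}\Big|_{\partial\Omega_1} \;=\; \tfrac{1}{2}f-\gamma B^{\ast}f.
\]
Adding these and noting that the $\gamma B^{\ast}f$ terms cancel gives $(\Lambda_\gamma+\Lambda^{11})Sf = f$ in $L^2(\partial\Omega_1)$. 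Together with Lemma \ref{l1.5} this yields invertibility with inverse $f\mapsto Sf|_{\partial\Omega_1}$, and hence Theorem \ref{t1.1} through formula \eqref{1.4}.

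The one point requiring care — and what I expect to be the main subtlety rather than a deep obstacle — is matching the orientation conventions: the $\pm$ subscripts in Proposition \ref{p1.2}(d), the outward normal used in the definition of $\Lambda_\gamma$ (outward from $\Omega_1$), and the minus sign in the $(1,1)$ entry of \eqref{1.1} (outward from $\Omega_2\setminus\bar\Omega_1$) must all be tracked consistently so that the $\gamma B^{\ast}f$ contributions cancel rather than double. Once the bookkeeping is set up correctly, the rest is immediate from Proposition \ref{p1.2} and uniqueness of the interior and exterior Dirichlet problems for $\divergence(\gamma\nabla\,\cdot\,)$.
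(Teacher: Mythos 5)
Your argument is correct, but it reaches the conclusion by a genuinely different route than the paper. The paper also starts from Lemma \ref{l1.5} (which gives $S|_{\partial\Omega_1}(\Lambda_\gamma+\Lambda^{11})=I$, i.e.\ surjectivity of $S|_{\partial\Omega_1}$ onto $H^1(\partial\Omega_1)$), but then it proves that $S|_{\partial\Omega_1}$ is \emph{injective} by a PDE argument: if $Sf|_{\partial\Omega_1}=0$, then $Sf$ solves $\divergence(\gamma\nabla\cdot)=0$ in $\Omega_1$ and in $\Omega_2\setminus\bar\Omega_1$ with vanishing boundary data (using $G(x,y)=0$ on $\partial\Omega_2$), so the maximum principle forces $Sf\equiv 0$ in $\Omega_2$, and the jump relation $\frac{\partial Sf}{\partial\nu_-}-\frac{\partial Sf}{\partial\nu_+}=\frac1\gamma f$ gives $f=0$; bijectivity plus boundedness (Proposition \ref{p1.2}(a), Proposition \ref{p1.3}) and the Banach inverse mapping theorem then yield $(\Lambda_\gamma+\Lambda^{11})^{-1}=S|_{\partial\Omega_1}$. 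You instead compute the right-inverse identity $(\Lambda_\gamma+\Lambda^{11})\,Sf|_{\partial\Omega_1}=f$ directly: identifying $Sf$ (via uniqueness of the interior and annular Dirichlet problems and $Sf|_{\partial\Omega_2}=0$) with the solutions defining $\Lambda_\gamma$ and $\Lambda^{11}$, and adding the two one-sided jump formulas of Proposition \ref{p1.2}(d) so that the $\gamma B^{*}f$ terms cancel. Your route avoids both the maximum principle and the open mapping theorem (and in fact re-derives injectivity of $S|_{\partial\Omega_1}$ for free, since $Sf|_{\partial\Omega_1}=0$ forces $f=0$ from your identity), at the price of having to justify carefully that the weakly defined conormal derivatives entering $\Lambda_\gamma$ and $\Lambda^{11}$ coincide a.e.\ with the nontangential limits used in the jump relations; this identification is supplied by Proposition \ref{p1.3} and the Mitrea--Taylor framework and is used implicitly in the paper as well (e.g.\ in the proof of Lemma \ref{l1.5}), so your bookkeeping of the sign conventions in \eqref{1.1} is the only delicate point and you have handled it correctly.
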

\begin{proof}
We first show that $S\arrowvert_{\partial{\Omega}_{1}}$ is an isomorphism from  $L^2({\partial{\Omega}_{1}})$ to $H^1({\partial{\Omega}_{1}})$. 
In fact Lemma \ref{l1.5} implies  $S\arrowvert_{\partial{\Omega}_{1}}$ is surjective. To show that $S\arrowvert_{\partial{\Omega}_{1}}$  is injective we assume that $\int_{\partial{\Omega}_{1}}G(x,y)f(y)dS(y)=0$ on $\partial{\Omega}_{1}$ for $f\in L^2(\partial{\Omega}_{1})$. By Proposition \ref{1.2} we know $Sf(x)$ is a solution to $\divergence(\gamma\nabla u)=0 $ in ${\Omega}_{1}$ and $Sf_{-}(x)=0$ on the boundary $\partial{\Omega}_{1}$. Hence the maximum principle 
of the uniformly elliptic equation implies $Sf(x)\equiv 0 $ in $\bar{\Omega}_{1}$. On the other hand $Sf(x)$ is also a solution to $\divergence(\gamma\nabla u)=0$
in $\Omega_{2}\backslash \bar{\Omega}_{1}$. In view of the choice $G(x, y)=0, x\in \partial{\Omega}_{2}$, we know $Sf(x)\arrowvert_{\partial{\Omega}_{2}}=0$ and  
Proposition \ref{1.2} implies $Sf_{+}(x)=0$ on the boundary $\partial{\Omega}_{1}$, so the maximum principle of the uniformly 
elliptic equation also implies $Sf(x)\equiv 0 $ in $\bar\Omega_{2}\backslash{\Omega}_{1}$.

From the analysis above we know that $Sf(x)\equiv 0$ in $\Omega_{2}$. By Proposition \ref{1.2}, we have 
\[
\frac{1}{\gamma(x)}f(x)=\frac{\partial{Sf(x)}}{\partial{\nu}_{-}}-\frac{\partial{Sf(x)}}{\partial{\nu}_{+}}=0,
\]
 which implies $f\equiv 0$ on the boundary $\partial{\Omega}_{1}$. Thus $S\arrowvert_{\partial{\Omega}_{1}}$ is bijective and Proposition \ref{1.2} implies 
$S\arrowvert_{\partial{\Omega}_{1}}$ is a bounded operator:$L^2(\partial{\Omega}_{1})\rightarrow H^1(\partial{\Omega}_{1})$. Finally Banach inverse mapping theorem 
gives us $S\arrowvert_{\partial{\Omega}_{1}}$ is an isomorphism which $L^2({\partial{\Omega}_{1}})$ to $H^1({\partial{\Omega}_{1}})$.

On the other hand, from Lemma \ref{l1.5}'s identity $S\arrowvert_{\partial{\Omega}_{1}}(\Lambda_{\gamma}+\Lambda_{11})=I$, bijection of
 $S\arrowvert_{\partial{\Omega}_{1}}$  implies $\Lambda_{\gamma}+\Lambda_{11}$ is also bijective and from  Proposition \ref{p1.3} we have that $\Lambda_{\gamma}+\Lambda_{11}$ is a bounded operator: $H^1({\partial{\Omega}_{1}})\rightarrow L^2({\partial{\Omega}_{1}})$. Banach inverse mapping theorem 
implies $(\Lambda_{\gamma}+\Lambda_{11})^{-1}=S\arrowvert_{\partial{\Omega}_{1}}$.
\end{proof}

\section{Construction of CGO Solutions from the Boundary Integral Equation}
 In this section we will follow Nachman's idea to construct Complex Geometrical Optic solutions to the equation $\divergence(\gamma\nabla u)=0$ from the boundary integral equation. We begin with some basic properties of Green's function. For $\rho\in \mathbb{C}^n$ with $\rho\cdot\rho=0$, $(-\xi^2+2i\rho\cdot\xi)^{-1}$ is easily checked to be locally 
integrable as a function of $\xi$ in $\mathbb{R}^n, n\geq3$. Its inverse Fourier transform formally written as 
\[
g_{\rho}(x)=g(x, \rho)=\frac{1}{(2\pi)^{n}}\displaystyle\int \dfrac{e^{ix\cdot\xi}}{-\xi^2+2i\rho\cdot\xi}d\xi
\]
is a tempered distribution satisfying 
\[
 (\triangle_{x}+2\rho \cdot \nabla_{x})g(x,\rho) =\delta_{0}(x).
\]
The distribution $G_{\rho}(x)=-e^{x\cdot \rho}g(x,\rho)$ is then a fundamental solution for the Laplace operator in $\mathbb{R}^n$:
\[
 -\triangle G_{\rho}(x)=\delta_{0}(x).
\]
It follows that $G_{\rho}(x)$ differs from the Green's function of classical potential theory by a global harmonic function in $\mathbb{R}^n$:
\[
 G_{\rho}(x)=G_{0}(x)+H_{\rho}(x), \quad \triangle H_{\rho}=0 \ \text{in}\  \mathbb{R}^n,
\]
where
\[
 G_{0}(x)=g(x,0)=\frac{1}{(n-2)\omega_n}|x|^{2-n}(\omega_n=\dfrac{(2\pi)^{n/2}}{\Gamma(n/2)}).
\]
Note that $G_{\rho}(x)$ is a smooth function for $x$ away from the origin and has the same singularity near $x=0$ as that of $G_{0}(x)$.

Using the family $G_{\rho}$ of Green's function defined above we now consider analogues of the classical single and double potentials. Let 
\[
S_{\rho}f(x)=\displaystyle\int_{\partial{\Omega}} G_{\rho}(x,y)f(y)dS(y),
\]
\[
  Df_{\rho}(x) =\displaystyle\int_{\partial{\Omega}} \frac{\partial{G_{\rho}(x,y)}}{\partial{\nu(y)}}f(y)dS(y),
\]
where $G_{\rho}(x, y)=:G_{\rho}(x-y)$ and to begin with, $f$ is continuous on $\partial{\Omega}$ and $x\in\mathbb{R}^n\backslash\partial{\Omega}$.
Define the boundary, or trace , double layer potential by 
\[
 B_{\rho}f(x)=p.v.\displaystyle\int_{\partial{\Omega}} \frac{\partial{G_{\rho}(x,y)}}{\partial{\nu(y)}}f(y)dS(y), \text{for}\  x\in \partial{\Omega}.
\]
Now we collect some properties about single and double layer potentials from Nachman's paper \cite{Na1} as follows (see Lemma $2.4$ and Lemma $2.5$ of \cite{Na1}). 

\begin{proposition}\label{p3.1}
 Let $\Omega$ be a bounded domain in $\mathbb{R}^n, n\geq 3$ with $C^{1,1}$ boundary and suppose $f\in H^{\frac{1}{2}}(\partial{\Omega})$. Then the function $u=S_{\rho}f$ have the following properties:
 
(a)\ $\triangle u  =0$ in $\mathbb{R}^n\backslash\partial{\Omega}$.

(b)\ $u$ is in $H^2(\Omega)$ and in $H^2(\Omega_{\theta}^{'})$ for $\theta>\theta_{0}$, where $\theta_{0}$ is a large enough number so that $\bar{\Omega}\subset \{x:\ 
|x|<\theta_{0}\}$, and for any $\theta>\theta_{0}$, $\Omega_{\theta}^{'}=\{x:\ x\notin \bar{\Omega}, |x|<\theta\}$.

(c)The boundary values $u_{+}(u_{-})$ of $u$ from outside (respectively inside) $\Omega$ are identical as elements of $H^{\frac{3}{2}}(\partial{\Omega})$
and agree with the trace of single layer potential $S_{\rho}f$.

(d)\  $u$ satisfies the following decay properties:
\begin{equation}\label{3.1}
\lim\limits_{\theta\rightarrow \infty}\displaystyle\int_{|y|=\theta}G_{\rho}(x, y)\frac{\partial {u}}{\partial{\nu}(y)}
-u(y)\frac{\partial {G_{\rho}(x, y)}}{\partial{\nu}(y)}dS(y)=0, 
a.e., x\in \mathbb{R}^n.
\end{equation}

(e)\ Let $g\in H^{\frac{3}{2}}(\partial{\Omega})$ and $v=D_{\rho}g$ defined in $\mathbb{R}^n\backslash\partial{\Omega}$. Then the nontangential limits $v_{+}(v_{-})$
of $v$ as we approach the boundary from outside (respectively inside) $\Omega$ exist and satisfy 
\[
 v_{{}^{+}_{-}}={}^{+}_{-}\frac{1}{2}g(x)+B_{\rho}f(x)\  a.e.,x\in \partial{\Omega}.
\]
\end{proposition}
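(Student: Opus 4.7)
The plan is to exploit the decomposition $G_\rho = G_0 + H_\rho$ noted just before the statement, which reduces parts (a), (b), (c), and (e) to classical layer-potential theory for the Newtonian kernel together with a smooth correction coming from the globally harmonic function $H_\rho$. Part (d), which is specific to the kernel $G_\rho$, will need to be handled separately using the Faddeev-type decay estimates for $g(\cdot,\rho)$.

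For (a), the kernel $G_\rho(x-y)$ is smooth away from the diagonal and satisfies $\Delta_x G_\rho(x-y) = 0$ whenever $x \neq y$, so differentiating under the integral sign gives $\Delta u = 0$ on $\mathbb{R}^n\setminus\partial\Omega$. For (b), (c), and (e) I would combine (i) the classical interior and exterior $H^2$ regularity of the Newtonian single layer potential $S_0 f$ on a $C^{1,1}$ domain for $f\in H^{1/2}(\partial\Omega)$, together with the continuity of its two-sided trace as an element of $H^{3/2}(\partial\Omega)$; (ii) the classical jump formulas for the Newtonian double layer potential acting on $H^{3/2}$ data; and (iii) the observation that $\int_{\partial\Omega}H_\rho(x-y)f(y)\,dS(y)$ and $\int_{\partial\Omega}\partial_{\nu(y)}H_\rho(x-y)g(y)\,dS(y)$ are $C^\infty$ in $x$ on all of $\mathbb{R}^n$ since $H_\rho$ is smooth and harmonic everywhere. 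The $H_\rho$-pieces therefore contribute only regular terms that match from both sides and cannot affect the jump relations, so (b), (c), (e) follow at once.

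The real content is (d). The plan here is to fix $x$ and apply Green's second identity to the pair $u$ and $G_\rho(x,\cdot)$ on the annular region $\{\theta_0 < |y| < \theta\}$, on which both functions are harmonic once $\theta$ is large enough that $x$ and $\overline\Omega$ lie inside $\{|y|<\theta_0\}$ (removing a small ball about $y=x$ if necessary). This rewrites the integral over $|y|=\theta$ in terms of a fixed integral over $|y|=\theta_0$ plus a residue at $y=x$, and reduces matters to showing that the integrand on $|y|=\theta$ itself decays faster than $\theta^{1-n}$. For this, the plan is to invoke Nachman's pointwise estimates on $g(\cdot,\rho)$ and $\nabla g(\cdot,\rho)$; translated through $G_\rho(x-y) = -e^{(x-y)\cdot\rho}g(x-y,\rho)$ they give suitable polynomial decay for $G_\rho(x,y)$, $\nabla_y G_\rho(x,y)$, and, since $u = S_\rho f$ is itself a convolution against the same kernel, for $u(y)$ and $\nabla u(y)$ as $|y|\to\infty$. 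Multiplying by the surface area $\omega_{n-1}\theta^{n-1}$ of $\{|y|=\theta\}$ and using $n\geq 3$ then forces the boundary integral to zero.

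The main obstacle is precisely part (d): unlike for the Newtonian potential, where every factor decays like $|y|^{2-n}$ or $|y|^{1-n}$ automatically, the presence of the exponential $e^{x\cdot\rho}$ means that obtaining a decay good enough to beat the surface-area growth requires genuine use of the Faddeev-type pointwise estimates for $g(\cdot,\rho)$ rather than a naive ODE-type bound on the fundamental solution.
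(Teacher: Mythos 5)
The paper itself offers no proof of this proposition: it is imported verbatim from Nachman \cite{Na1} (Lemmas 2.4 and 2.5), so the comparison is with the argument of that cited source. Your handling of (a), (b), (c) and (e) via the splitting $G_\rho=G_0+H_\rho$ — classical Newtonian layer-potential theory on a $C^{1,1}$ domain for $S_0$ and $D_0$, plus the observation that the $H_\rho$-pieces are globally smooth and therefore jump-free — is exactly the standard route and is sound.

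The genuine gap is in (d). You claim that the Faddeev-type estimates for $g(\cdot,\rho)$, transported through $G_\rho(x-y)=-e^{(x-y)\cdot\rho}g(x-y,\rho)$, yield polynomial decay of $G_\rho(x,y)$, $\nabla_y G_\rho(x,y)$, and of $u(y)=S_\rho f(y)$, $\nabla u(y)$ as $|y|\to\infty$. This is false: $\rho\cdot\rho=0$ with $\rho\neq0$ forces $\mathrm{Re}\,\rho\neq0$, so $|e^{(x-y)\cdot\rho}|=e^{(x-y)\cdot\mathrm{Re}\,\rho}$ grows exponentially as $|y|\to\infty$ in the half-space of directions with $y\cdot\mathrm{Re}\,\rho<0$, and correspondingly $S_\rho f$ grows exponentially in the opposite directions — exponential growth is precisely the CGO feature of these kernels, and no pointwise decay of the individual factors is available, so ``separate bounds times the surface area $\theta^{n-1}$'' cannot close the argument. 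The correct mechanism (the one behind Nachman's lemma) is to insert $u(y)=\int_{\partial\Omega}G_\rho(y-z)f(z)\,dS(z)$ into the integrand of \eqref{3.1} \emph{before} estimating: in every resulting product the exponentials combine as $e^{(x-y)\cdot\rho}e^{(y-z)\cdot\rho}=e^{(x-z)\cdot\rho}$, which is independent of $y$ and bounded for $z\in\partial\Omega$ and fixed $x$, leaving only factors $g_\rho$, $\nabla g_\rho$ (and $\rho\, g_\rho g_\rho$ terms from differentiating the exponential); the decay estimates for $g_\rho$ are then applied to this combined kernel to beat $\theta^{n-1}$. Note also that your Green's-identity step only shows the flux integral is independent of $\theta$ for large $\theta$; it does not lessen the decay one must prove, so the entire burden falls on the cancellation argument just described, which is absent from your sketch.
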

With the above preliminary works in hand we first deduce a boundary integral equation for the CGO solutions. 

From the analysis we performed in Section $2$ we can assume $\Omega=B_R(0)$
and $\gamma(x)\equiv 1$ near the boundary $\partial{B}_{R}(0)$ and for $\gamma \in Lip(B_{R}(0))(\text{or}\  \gamma \in  C^1(\bar{B}_{R}(0)))$ define the Dirichlet-to-Neumann map as follows 
\[
\Lambda_{\gamma}:\ f\in H^{\frac{3}{2}}(\partial{B}_{R}(0))\rightarrow \frac{\partial{u_{f}}}{\partial{\nu}}\in H^{\frac{1}{2}}(\partial{B}_{R}(0)),
\]
where $u_{f}$ is  the solution to 
\[
\left\{
\renewcommand{\arraystretch}{1.25}
\begin{array}{lll}
\divergence(\gamma\nabla u)=0,\ \ \ \ \text{in}\ B_{R}(0),\\
 u$\textbar$_{\partial{B}_R(0)}=f.
\end{array}
\right.
\]
Clearly $\Lambda_{\gamma}$ is a bounded operator: $H^{\frac{3}{2}}(\partial{B}_{R}(0))\rightarrow H^{\frac{1}{2}}(\partial{B}_{R}(0))$.

We are now ready to establish a one to one correspondence between the solutions of the boundary integral equation and those of the following exterior problem:
\begin{equation}\label{3.2}
\renewcommand{\arraystretch}{1.25}
\begin{array}{lll}
&(i)\ \triangle\psi=0 \ \ \ \ \ \  \ \ \ \ \ \ \ \ \ \ \ \ \ \ \ \ \ \ \ \ \ \ \ \ \ \text{in}\  \mathbb{R}^n\backslash \bar{B}_{R}(0),\\
&(ii)\ \psi\in H^2(B_{R^{'}}\backslash\bar{B}_{R}(0)), \ \ \ \ \ \ \ \ \ \ \ \ \text{for} \ R^{'}>R,\\
&(iii)\  \psi(x, \rho)-e^{x\cdot \rho} \ \ \ \ \ \ \  \ \ \ \ \ \ \ \ \ \ \ \ \ \ \ \text{satisfies}\  \eqref{3.1},\\
&(iv)\ \frac{\partial{\psi}}{\partial{\nu_{+}}}=\Lambda_{\gamma}\psi \quad \quad \quad \quad \quad \ \ \ \ \ \ \ \ \ \ \text{on}\  \partial{B}_R(0).
\end{array}
\end{equation}
We state Lemma $2.7$ of Nachman's paper \cite{Na1} as follows.

\begin{proposition}\label{p3.2}
 (a)\ Suppose $\psi$ solves $(\eqref{3.2}(i)-(iv))$. Then its trace on the boundary $f_{\rho}=\psi_{+}=\psi\arrowvert_{\partial{B}_R(0))}$ solves
\begin{equation}\label{3.3}
f_{\rho}=e^{x\cdot \rho}-(S_{\rho}\Lambda_{\gamma}-B_{\rho} -\frac{1}{2}I)f_{\rho}.
 \end{equation}
(b)\ Conversely, suppose $f_{\rho}\in H^{\frac{3}{2}}(\partial{B}_R(0)))$ solves \eqref{3.3}. Then the function $\psi(x, \rho)$ defined for
 $x$ in $\mathbb{R}^n\backslash \bar{B}_{R}(0)$ by 
\begin{equation}\label{3.4}
\psi(x, \rho)=e^{x\cdot \rho}-(S_{\rho}\Lambda_{\gamma}-D_{\rho})f_{\rho}
 \end{equation}
solves the exterior problem \eqref{3.2}$(i), (ii), (iii), (iv)$.  Furthermore $\psi\arrowvert_{\partial{B}_R(0))}=f_{\rho}$.
\end{proposition}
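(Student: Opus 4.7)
For part (a), my plan is to apply Green's second identity in the exterior domain $\mathbb{R}^n \setminus \bar B_R(0)$ to $\tilde\psi := \psi - e^{y\cdot\rho}$ (which is harmonic there, since $\rho\cdot\rho=0$) against $G_\rho(x-\cdot\,)$. Working on the annulus $\{R < |y| < \theta\}$ and letting $\theta\to\infty$, hypothesis (iii) is precisely what kills the outer boundary contribution via \eqref{3.1}; after accounting for the orientation (the outward normal of the exterior on $\partial B_R$ is $-\nu$), I obtain a representation of $\psi(x)-e^{x\cdot\rho}$ as boundary integrals of $f_\rho - e^{y\cdot\rho}$ against $D_\rho$ and of $\Lambda_\gamma f_\rho - \partial_\nu e^{y\cdot\rho}$ against $S_\rho$. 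A second application of Green's identity to $e^{y\cdot\rho}$ in the interior $B_R$, with $x$ in the exterior, yields $D_\rho(e^{y\cdot\rho})(x) = S_\rho(\partial_\nu e^{y\cdot\rho})(x)$, so the $e^{y\cdot\rho}$ contributions cancel and I am left with \eqref{3.4}. Taking the outside trace using $D_\rho f_\rho|_+ = \tfrac{1}{2}f_\rho + B_\rho f_\rho$ (Proposition \ref{p3.1}(e)) and the continuity of $S_\rho$ across $\partial B_R$ (Proposition \ref{p3.1}(c)) then produces \eqref{3.3} after rearrangement.

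For part (b), I define $\psi$ by \eqref{3.4} in the exterior and verify the four items in turn. Conditions (i) and (ii) are immediate from Proposition \ref{p3.1}(a,b) (with the analogous facts for $D_\rho$) together with $\rho\cdot\rho=0$. For the radiation condition (iii), the single layer piece is directly covered by Proposition \ref{p3.1}(d); the double layer piece needs a parallel decay estimate on $\partial_{\nu(y)}G_\rho(x,y)$, and I will invoke the corresponding fact from Nachman's treatment in \cite{Na1}. The identity $\psi_+ = f_\rho$ follows by taking the outside trace of \eqref{3.4}, applying the jump formula for $D_\rho$, and substituting into \eqref{3.3}.

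The key step is (iv). My plan is to extend $\psi$ into $B_R$ by the same formula \eqref{3.4}, call this extension $\tilde\psi$, and show $\tilde\psi \equiv 0$ inside. Indeed, $\tilde\psi$ is harmonic in $B_R$ (every constituent is), and the inside trace $(D_\rho f_\rho)_- = -\tfrac{1}{2}f_\rho + B_\rho f_\rho$ combined with \eqref{3.3} collapses its boundary value to $\tilde\psi_- = 0$; uniqueness for the Dirichlet problem for $-\Delta$ then forces $\tilde\psi \equiv 0$, whence $\partial_{\nu_-}\tilde\psi = 0$. Subtracting this identity from $\partial_{\nu_+}\psi$, the $\partial_\nu e^{x\cdot\rho}$ terms cancel, the normal derivative of the double layer is continuous across $\partial B_R$ (so $D_\rho f_\rho$ drops out), and the classical single-layer jump relation $\partial_{\nu_-}(S_\rho h) - \partial_{\nu_+}(S_\rho h) = h$ produces exactly $\Lambda_\gamma f_\rho$. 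What makes this classical jump applicable is that $\gamma\equiv 1$ in a neighbourhood of $\partial B_R$, so even though the ambient equation inside is $\divergence(\gamma\nabla u)=0$, the $G_\rho$-based layer potentials obey the standard Plemelj relations for $-\Delta$ exactly where we need them.

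The hard part, in my view, is technical rather than conceptual. It consists of justifying the Green's identity manipulations in part (a) with only the $H^2_{\mathrm{loc}}$ regularity of $\psi$ and the mild singularity of $G_\rho$, and of pushing Proposition \ref{p3.1}(d) from the single to the double layer potential. Both issues are handled by arguments entirely parallel to Nachman \cite{Na1}; the only ingredient specific to our setting is that the interior equation has variable $\gamma$, but since all boundary computations take place where $\gamma\equiv 1$, its effect enters solely through the action of $\Lambda_\gamma$ on $f_\rho$.
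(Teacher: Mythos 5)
Your proposal is correct and is essentially the argument the paper relies on: the paper offers no proof of Proposition \ref{p3.2}, simply quoting it as Lemma 2.7 of Nachman \cite{Na1}, and your route --- Green's identity in the exterior annulus with the radiation condition \eqref{3.1} killing the outer boundary term, cancellation of the $e^{y\cdot\rho}$ contributions via the interior Green identity, the jump relations of Proposition \ref{p3.1}, and the device of extending $\psi$ into $B_{R}(0)$ by the same formula \eqref{3.4} and showing the extension vanishes so that the single-layer jump yields condition (iv) --- is precisely Nachman's proof. Your closing observation, that the variable coefficient enters only through $\Lambda_{\gamma}$ because $\gamma\equiv 1$ near $\partial B_{R}(0)$, is exactly the point that lets the Schr\"odinger-case lemma transfer verbatim to the present setting.
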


In the following we will use the boundary integral equation \eqref{3.3} to construct CGO solutions to $\divergence(\gamma\nabla u)=0$ in $\mathbb{R}^n$.

\begin{lemma}\label{l3.3}
 Suppose $f_{\rho}\in H^{\frac{3}{2}}(\partial{B}_R(0)))$ solves \eqref{3.3}. Then

(a)\ There exists a unique solution $u\in H^{2}_{loc}(\mathbb{R}^n)$ to 
\begin{equation}\label{3.5}
 \divergence(\gamma\nabla u)=0
\end{equation}
in $\mathbb{R}^n$ such that $u=\psi(x, \rho)$ in $\mathbb{R}^n\backslash {B}_{R}(0)$.

(b)\ Let $v=\gamma^{\frac{1}{2}}u$ and $v\in H^1_{loc}(\mathbb{R}^n)$ is a weak solution to the following Schr\"{o}dinger equation
\begin{equation*}
 -\triangle v+qv=0, \text{in} \ \mathbb{R}^n,
\end{equation*}
where $q=\dfrac{\triangle\gamma^{1/2}}{\gamma^{1/2}}$. Furthermore the following identity holds
\begin{equation}\label{3.6}
v(x)=e^{x\cdot \rho}+\displaystyle\int_{\mathbb{R}^n}\nabla {\gamma^{1/2}} \cdot \nabla(\frac{1}{\gamma^{1/2}}G_{\rho}(x, y)v) dy,\ a.e., x\in \mathbb{R}^n.
 \end{equation}

(c)\ Let $v(x)=e^{x\cdot \rho}(1+\Phi(x, \rho))$. Then $\Phi(x, \rho)\in H^1_{loc}(\mathbb{R}^n)$ satisfies
\begin{equation}\label{3.7}
\Phi(x, \rho)=\displaystyle\int_{\mathbb{R}^n} g_{\rho}(x, y)q(y)dy +\displaystyle\int_{\mathbb{R}^n} g_{\rho}(x, y)q(y)\Phi(y, \rho)dy
\end{equation}
as a tempered distribution.
\end{lemma}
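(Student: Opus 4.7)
The plan is to build $u$ by gluing the exterior function $\psi$ (available from the boundary integral equation via Proposition~\ref{p3.2}) with a solution of the interior Dirichlet problem, then to perform the Liouville-type change of variables $v=\gamma^{1/2}u$ in the weak sense, and finally to substitute the CGO ansatz $v=e^{x\cdot\rho}(1+\Phi)$ to extract the Lippmann--Schwinger-type equation~\eqref{3.7}. For~(a), I let $u_{\mathrm{in}}\in H^2(B_R(0))$ be the unique solution of $\divergence(\gamma\nabla u_{\mathrm{in}})=0$ in $B_R(0)$ with $u_{\mathrm{in}}|_{\partial B_R(0)}=f_\rho\in H^{3/2}(\partial B_R(0))$, whose existence and $H^2$ regularity follow from Lax--Milgram together with standard elliptic regularity for Lipschitz coefficients. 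Setting $u=u_{\mathrm{in}}$ in $B_R(0)$ and $u=\psi$ in $\mathbb{R}^n\setminus\bar B_R(0)$, the Dirichlet traces match by construction and the Neumann match $\partial u_{\mathrm{in}}/\partial\nu_-=\Lambda_\gamma f_\rho=\partial\psi/\partial\nu_+$ is precisely condition~\eqref{3.2}(iv). Since $\gamma\equiv 1$ in a neighborhood of $\partial B_R(0)$, both pieces are harmonic in a tubular neighborhood of the interface, so the definitions glue into a single element of $H^2_{\mathrm{loc}}(\mathbb{R}^n)$ and hence a weak solution of $\divergence(\gamma\nabla u)=0$ in all of $\mathbb{R}^n$. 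Uniqueness reduces to uniqueness for the interior Dirichlet problem.

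For~(b), testing $\divergence(\gamma\nabla u)=0$ against $\gamma^{-1/2}\varphi$ with $\varphi\in C_c^\infty(\mathbb{R}^n)$ and using $u=\gamma^{-1/2}v$ yields
\[
\int_{\mathbb{R}^n}\nabla v\cdot\nabla\varphi\,dy=\int_{\mathbb{R}^n}\nabla\gamma^{1/2}\cdot\nabla\bigl(\gamma^{-1/2}v\varphi\bigr)\,dy,
\]
which is the weak form of $-\Delta v+qv=0$ with $q=\Delta\gamma^{1/2}/\gamma^{1/2}$ understood distributionally. To derive~\eqref{3.6}, I test this identity with $\varphi(y)=G_\rho(x,y)\eta_R(y)$, where $\eta_R$ is a smooth cutoff equal to $1$ on $|y|\le R$ and vanishing for $|y|\ge 2R$. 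On the left side, the $\delta_x$ contribution produced by $-\Delta_y G_\rho=\delta_x$ yields the term $v(x)$; splitting $v=e^{y\cdot\rho}+(v-e^{y\cdot\rho})$ and using $\Delta e^{y\cdot\rho}=0$ (from $\rho\cdot\rho=0$) together with the identity $G_\rho(x,y)e^{y\cdot\rho}=-e^{x\cdot\rho}g_\rho(x-y)$ extracts the plane wave $e^{x\cdot\rho}$; the remaining contribution supported on $|y|\sim R$ vanishes as $R\to\infty$ by the radiation-type decay~\eqref{3.1}, which $v-e^{y\cdot\rho}$ inherits from $\psi-e^{y\cdot\rho}$ because $v=u=\psi$ and $\gamma\equiv 1$ outside $B_R(0)$. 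On the right side, $\nabla\gamma^{1/2}$ is compactly supported (since $\gamma\equiv 1$ outside $B_{R_0}$), so $\eta_R$ can be dropped in the limit, giving exactly~\eqref{3.6}.

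For~(c), I substitute $v(y)=e^{y\cdot\rho}(1+\Phi(y))$ into~\eqref{3.6}. Using once more $G_\rho(x,y)e^{y\cdot\rho}=-e^{x\cdot\rho}g_\rho(x-y)$, one can pull $e^{x\cdot\rho}$ out of both sides to obtain
\[
\Phi(x)=-\int_{\mathbb{R}^n}\nabla\gamma^{1/2}(y)\cdot\nabla_y\bigl(\gamma^{-1/2}(y)\,g_\rho(x-y)(1+\Phi(y))\bigr)\,dy;
\]
a distributional integration by parts then transfers the outer gradient onto $\gamma^{1/2}$, producing the pairing $\langle\Delta\gamma^{1/2},\,\gamma^{-1/2}g_\rho(x-y)(1+\Phi)\rangle=\int q(y)\,g_\rho(x-y)(1+\Phi(y))\,dy$, which is~\eqref{3.7}. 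The main obstacle is making these manipulations rigorous when $\gamma$ is only Lipschitz or $C^1$, since then $q$ exists only as a distribution and the integrability of $g_\rho$ in Bourgain-type spaces has to be used carefully. I would handle this by mollifying $\gamma$ to $\gamma_\varepsilon=\gamma*\chi_\varepsilon$, carrying out both~\eqref{3.6} and the integration by parts in the smooth setting, and then passing $\varepsilon\to 0$ using the local $H^1$ convergence of $\gamma_\varepsilon^{1/2}\to\gamma^{1/2}$ together with the mapping properties of convolution with $g_\rho$ on suitable weighted $L^2$ spaces.
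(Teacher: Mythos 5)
Your overall route is the same as the paper's: in (a) you glue the interior $H^2$ solution with the exterior function $\psi$ supplied by Proposition \ref{p3.2}, matching Dirichlet traces by construction and Neumann traces through \eqref{3.2}(iv); in (b) you test the weak form of the transformed equation against (a cutoff of) $G_\rho(x,\cdot)$ and use the radiation condition \eqref{3.1}, inherited from $\psi-e^{x\cdot\rho}$ since $v=u=\psi$ and $\gamma\equiv1$ outside $B_R(0)$, to generate the plane wave; in (c) you use $G_\rho(x-y)e^{y\cdot\rho}=-e^{x\cdot\rho}g_\rho(x-y)$ and an integration by parts justified by mollifying $\gamma$. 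Up to bookkeeping this is the paper's argument: the paper cuts off with the piecewise-linear $H_\delta$ adapted to $\partial B_R(0)$ and applies Green's formula separately in the annulus where $v\in H^2$, while you use one smooth cutoff at a large radius; both reduce the error to the sphere integrals appearing in \eqref{3.1} (in your version this conversion of the cutoff-region volume terms into those sphere integrals should be spelled out, e.g.\ by taking a radial cutoff vanishing near the outer sphere and integrating by parts in the region where everything is harmonic).

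The one step you have not justified is the central one in (b): $\varphi(y)=G_\rho(x,y)\eta_R(y)$ is not an admissible test function for the $H^1$ weak formulation, since $\nabla_y G_\rho(x,y)\sim|x-y|^{1-n}$ fails to be locally square integrable for $n\ge3$, and the ``$\delta_x$ contribution'' you invoke is a pairing of $\delta_x$ with $v$, which is only in $H^1_{loc}$ and hence not pointwise defined in general; accordingly \eqref{3.6} is only claimed for a.e.\ $x$. The paper addresses exactly this by replacing $G_\rho$ with the regularized kernel $G_\rho^{\varepsilon}$ of \eqref{3.9}, carrying out the whole computation with $G_\rho^{\varepsilon}$, and only at the end letting $\varepsilon\to0$, using that $-\triangle G_\rho^{\varepsilon}$ is an approximation of the identity together with $L^p$ bounds for convolution with the kernels $G_0$ and $\nabla G_0$. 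Your proposed mollification of $\gamma$ is the right tool for (c) — and there it suffices, as in the paper, to smooth only the coefficient inside the integral of \eqref{3.6}, keeping the solution $v$ fixed, so no convergence of solutions is needed — but it does not touch the kernel singularity in (b). With that regularization inserted, your argument coincides with the paper's proof.
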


\begin{proof}
For $f_{\rho}\in H^{\frac{3}{2}}(\partial{B}_R(0)))$ we can find a solution $w\in H^2(B_{R}(0))$ such that 
\[
\left\{
\renewcommand{\arraystretch}{1.25}
\begin{array}{lll}
\divergence(\gamma\nabla w)=0\ \ \ \ \text{in}\ B_{R}(0),\\
 w$\textbar$_{\partial{B}_R(0)}=f_{\rho}.
\end{array}
\right.
\]
Proposition \ref{p3.2} implies $w\arrowvert_{\partial{B}_R(0)}=\psi(x, \rho)\arrowvert_{\partial{B}_R(0)}=f_{\rho}$ and
 $\frac{\partial{w}}{\partial{\nu}_{-}}=\frac{\partial{\psi(x, \rho)}}{\partial{\nu}_{+}}$. Hence defining 
\[
u=
\left\{
\renewcommand{\arraystretch}{1.25}
\begin{array}{lll}
w \quad \quad \ \ \ \ \ \ \  \text{in}\  B_{R}(0),\\
\psi(x, \rho) \quad  \quad \text{in} \ \mathbb{R}^n\backslash  B_{R}(0),
\end{array}
\right.
\]
we know $u\in H^2_{loc}(\mathbb{R}^n)$ is a solution to 
\[
 \divergence(\gamma\nabla u)=0\ \ \ \ \text{in}\  \mathbb{R}^n.
\]
The uniqueness follows from the maximum principle of the uniformly elliptic equation. Then $(a)$ follows.

Letting $v=\gamma^{\frac{1}{2}}u$ in view of $\gamma\equiv 1$ in $\mathbb{R}^n\backslash B_{R_{0}}(0) (R_{0}<R)$, we can deduce from
 \eqref{3.5} that $v\in H^1_{loc}(\mathbb{R}^n)\bigcap H^2_{loc}(\mathbb{R}^n\backslash \bar{B}_{R_{0}}(0))$ is a weak solution to 
\begin{equation}\label{3.8}
 -\triangle v+qv=0, \text{in} \ \mathbb{R}^n,
\end{equation}
where $q=\dfrac{\triangle\gamma^{1/2}}{\gamma^{1/2}}$.

For any  $R^{'}>R$ and $\delta>0$ small enough, we can consider a Lipschitz function $h_{\delta}(t)$ given by
\[
h_{\delta}(t)=
\left\{
\renewcommand{\arraystretch}{1.25}
\begin{array}{lll}
1\ \ \ \ \ \ \  t\in [0, R^{'}]\backslash [R-\delta, R+\delta],\\
\frac{R-t}{\delta}\ \ \ t\in [R-\delta, R],\\
\frac{t-R}{\delta}\ \ \ t\in[R, R+\delta],
\end{array}
\right.
\]
and then define the function $H_{\delta}(x)=h_{\delta}(|x|)\in Lip(B_{R^{'}}(0)), x \in \bar {B}_{R^{'}}(0)$. Clearly, we have \\
\[
\renewcommand{\arraystretch}{1.25}
\begin{array}{lll}
&(i)\ |H_{\delta}(x)|\leq 1\ and \   H_{\delta}(x)\arrowvert_{\partial{B}_R(0)}=0,\\
&(ii)\ spt{|\nabla H_{\delta}(x)|}\subset T_{\delta}=:\bar{B}_{R+\delta}(0)\backslash
 B_{R-\delta}(0).
\end{array}
\]
Given $\varepsilon>0$ we define the following regularized version of $G_{\rho}(x, y)$ by 
\begin{equation}\label{3.9}
G_{\rho}^{\varepsilon}(x, y)=\frac{1}{(n-2)\omega_{n}}(|x-y|^2+\varepsilon^2)^{\frac{2-n}{2}}+H_{\rho}(x-y).
\end{equation}
Taking the test function $H_{\delta}(y)G_{\rho}^{\varepsilon}(x, y)\in H^1_{0}(B_{R}(0))$ in \eqref{3.8}, we have 
\begin{equation}\label{3.10}
\displaystyle\int_{B_{R}(0)} \nabla v\cdot \nabla (H_{\delta}(y)G_{\rho}^{\varepsilon}(x, y))dy=\displaystyle\int_{B_{R}(0)}\nabla 
{\gamma^{1/2}} \cdot \nabla(\frac{1}{\gamma^{1/2}}v H_{\delta}(y) G_{\rho}^{\varepsilon}(x, y)) dy.
\end{equation}

On the other hand since $v\in H^2({B}_{R^{'}}(0)\backslash \bar{B}_{R}(0))$ and $\gamma\equiv 1\  \text{in} \ {B}_{R^{'}}(0)\backslash \bar{B}_{R}(0),$ 
Green's formula gives us 
\begin{equation}\label{3.11}
\renewcommand{\arraystretch}{1.25}
\begin{array}{lll}
&\displaystyle\int_{{T_{R^{'}}}}\nabla 
{\gamma^{1/2}} \cdot \nabla(\frac{1}{\gamma^{1/2}}v H_{\delta}(y) G_{\rho}^{\varepsilon}(x, y))dy=
\displaystyle\int_{T_{R^{'}}}-\triangle v H_{\delta}(y)G_{\rho}^{\varepsilon}(x, y)dy\\
=
&\displaystyle\int_{T_{R^{'}}}\nabla v\cdot \nabla (H_{\delta}(y)G_{\rho}^{\varepsilon}(x, y))dy-
\displaystyle\int_{\partial{B_{R^{'}}}(0)}\frac{\partial{v}}{\partial{\nu}}G_{\rho}^{\varepsilon}(x, y)dS(y),
\end{array}
\end{equation}
where $T_{R^{'}}=B_{R^{'}}(0)\backslash \bar{B}_{R}(0).$

Now from \eqref{3.10}$+$\eqref{3.11} we have
\begin{equation}\label{3.12}
 \renewcommand{\arraystretch}{1.25}
\begin{array}{lll}
&\displaystyle\int_{B_{R^{'}}(0)}\nabla 
{\gamma^{1/2}} \cdot \nabla(\frac{1}{\gamma^{1/2}}v H_{\delta}(y) G_{\rho}^{\varepsilon}(x, y)) dy\\
=&\displaystyle\int_{B_{R^{'}}(0)}\nabla v\cdot \nabla (H_{\delta}(y)G_{\rho}^{\varepsilon}(x, y))dy-
\displaystyle\int_{\partial{B_{R^{'}}}(0)}\frac{\partial{v}}{\partial{\nu}}G_{\rho}^{\varepsilon}(x, y)dS(y)=:I+II.
\end{array}
\end{equation}
Note that the left hand side of \eqref{3.12} can be rewritten as follows
\begin{equation}\label{3.13}
 \renewcommand{\arraystretch}{1.25}
\begin{array}{lll}
&\displaystyle\int_{B_{R^{'}}(0)}\nabla 
{\gamma^{1/2}} \cdot \nabla(\frac{1}{\gamma^{1/2}}v  G_{\rho}^{\varepsilon}(x, y)) H_{\delta}(y)dy\\
&+\displaystyle\int_{B_{R^{'}}(0)}\nabla 
{\gamma^{1/2}} \cdot \nabla {H_{\delta}(y)}\frac{1}{\gamma^{1/2}}v  G_{\rho}^{\varepsilon}(x, y) dy=:III+IV.
\end{array}
\end{equation}
Letting $\delta\rightarrow0$ Lebesgue dominated convergence theorem implies 
\begin{equation}\label{3.14}
 \lim\limits_{\delta\rightarrow 0}III=\displaystyle\int_{B_{R^{'}}(0)}\nabla 
{\gamma^{1/2}} \cdot \nabla(\frac{1}{\gamma^{1/2}}v  G_{\rho}^{\varepsilon}(x, y))dy.
\end{equation}

On the other hand in view of $spt{|\nabla H_{\delta}(x)|}\subset T_{\delta}=:\bar{B}_{R+\delta}(0)\backslash
 B_{R-\delta}(0)$ and $\gamma\equiv1$ in $\mathbb{R}^n \backslash\bar{B}_{R_0}(0)$, when $\delta<R-R_{0}$ we have 
\begin{equation}\label{3.15}
IV=0.
 \end{equation}
Combining \eqref{3.12}, \eqref{3.13}, \eqref{3.14} and \eqref{3.15}, we deduce that
\begin{equation}\label{3.16}
  \lim\limits_{\delta\rightarrow 0}\displaystyle\int_{B_{R^{'}}(0)}\nabla 
{\gamma^{1/2}} \cdot \nabla(\frac{1}{\gamma^{1/2}}v H_{\delta}(y) G_{\rho}^{\varepsilon}(x, y)) dy=\displaystyle\int_{B_{R^{'}}(0)}\nabla 
{\gamma^{1/2}} \cdot \nabla(\frac{1}{\gamma^{1/2}}v  G_{\rho}^{\varepsilon}(x, y))dy.
\end{equation}
For $I$ we have 
\begin{equation}\label{3.17}
 I=\displaystyle\int_{B_{R^{'}}(0)}\nabla v\cdot \nabla G_{\rho}^{\varepsilon}(x, y)H_{\delta}(y)dy+
\displaystyle\int_{B_{R^{'}}(0)}\nabla v\cdot \nabla H_{\delta}(y)G_{\rho}^{\varepsilon}(x, y)dy=:V+VI.
\end{equation}
For $V$ by letting $\delta\rightarrow0$ Lebesgue dominated convergence theorem implies 
\begin{equation}\label{3.18}
 \lim\limits_{\delta\rightarrow 0}V=\displaystyle\int_{B_{R^{'}}(0)}\nabla v\cdot \nabla G_{\rho}^{\varepsilon}(x, y)dy.
\end{equation}
In view of  $spt|\nabla H_{\delta}|\subset T_{\delta}$ and $v\in H_{loc}^2(\mathbb{R}^n\backslash \bar{B}_{R_0})$ integration by parts 
gives us 
\begin{equation}\label{3.19}
 \renewcommand{\arraystretch}{1.25}
\begin{array}{lll}
VI&=\displaystyle\int_{T_{\delta}}\nabla v\cdot \nabla H_{\delta}(y)G_{\rho}^{\varepsilon}(x, y)dy\\
   &= -\displaystyle\int_{T_{\delta}} \triangle v H_{\delta}(y)G_{\rho}^{\varepsilon}(x, y)dy-
\displaystyle\int_{T_{\delta}}\nabla v\cdot \nabla G_{\rho}^{\varepsilon}(x, y)H_{\delta}(y)dy\\
&+\displaystyle\int_{\partial{B}_{R+\delta}(0)}\frac{\partial{v}}{\partial{\nu}}G_{\rho}^{\varepsilon}(x, y)dS(y)-
\displaystyle\int_{\partial{B}_{R-\delta}(0)}\frac{\partial{v}}{\partial{\nu}}G_{\rho}^{\varepsilon}(x, y)dS(y)
\end{array}
\end{equation}
Since $v\in H_{loc}^2(\mathbb{R}^n\backslash \bar{B}_{R_0})\bigcap C^{\infty}(\mathbb{R}^n\backslash \bar{B}_{R_0})$ and $G_{\rho}^{\varepsilon}(x, y)$
is smooth, letting $\delta\rightarrow 0$ in \eqref{3.19} we have
\begin{equation}\label{3.20}
 \lim\limits_{\delta\rightarrow 0}VI=0.
\end{equation}
Combining \eqref{3.12}, \eqref{3.17}, \eqref{3.18} and \eqref{3.20}, we have 
\begin{equation}\label{3.21}
  \lim\limits_{\delta\rightarrow 0} \displaystyle\int_{B_{R^{'}}(0)}\nabla v\cdot \nabla (H_{\delta}(y)G_{\rho}^{\varepsilon}(x, y))dy=
\displaystyle\int_{B_{R^{'}}(0)}\nabla v\cdot \nabla G_{\rho}^{\varepsilon}(x, y)dy,
\end{equation}
and from \eqref{3.12}, \eqref{3.16} and \eqref{3.21} we derive

\begin{equation}\label{3.22}
 \renewcommand{\arraystretch}{1.25}
\begin{array}{lll}
&\displaystyle\int_{B_{R^{'}}(0)}\nabla 
{\gamma^{1/2}} \cdot \nabla(\frac{1}{\gamma^{1/2}}v  G_{\rho}^{\varepsilon}(x, y)) dy\\
=&\displaystyle\int_{B_{R^{'}}(0)}\nabla v\cdot \nabla G_{\rho}^{\varepsilon}(x, y)dy-
\displaystyle\int_{\partial{B_{R^{'}}}(0)}\frac{\partial{v}}{\partial{\nu}}G_{\rho}^{\varepsilon}(x, y)dS(y).
\end{array}
\end{equation}
On the other hand, Green's formula gives us
\begin{equation}\label{3.23}
 \renewcommand{\arraystretch}{1.25}
\begin{array}{lll}
&-\displaystyle\int_{B_{R^{'}}(0)}v\triangle G_{\rho}^{\varepsilon}(x, y)
 dy\\
=&\displaystyle\int_{B_{R^{'}}(0)}\nabla v\cdot \nabla {G_{\rho}^{\varepsilon}(x, y)} dy-
\displaystyle\int_{\partial{B_{R^{'}}}(0)}\frac{\partial{G_{\rho}^{\varepsilon}(x, y)}}{\partial{\nu(y)}}vdS(y).
\end{array}
\end{equation}
From \eqref{3.22} and \eqref{3.23}, we deduce 
\begin{equation}\label{3.24}
 \renewcommand{\arraystretch}{1.25}
\begin{array}{lll}
-\displaystyle\int_{B_{R^{'}}(0)}v\triangle G_{\rho}^{\varepsilon}(x, y)&=
\displaystyle\int_{\partial{B_{R^{'}}}(0)}\frac{\partial{v}}{\partial{\nu}}G_{\rho}^{\varepsilon}(x, y)dS(y)-
\displaystyle\int_{\partial{B_{R^{'}}}(0)}\frac{\partial{G_{\rho}^{\varepsilon}(x, y)}}{\partial{\nu(y)}}vdS(y)\\
&+\displaystyle\int_{B_{R^{'}}(0)}\nabla 
{\gamma^{1/2}} \cdot \nabla(\frac{1}{\gamma^{1/2}}v  G_{\rho}^{\varepsilon}(x, y)) dy.
\end{array}
\end{equation}
Due to $\gamma\equiv 1 \ \text{in}\  \mathbb{R}^n \backslash B_{R^{'}}(0)$ and $v=\gamma^{1/2}u$, letting $\varepsilon\rightarrow 0$ 
in \eqref{3.24} we have 
for almost every $x\in B_{R^{'}}(0)$
\begin{equation}\label{3.25}
 \renewcommand{\arraystretch}{1.25}
\begin{array}{lll}
v(x)&=
\displaystyle\int_{\partial{B_{R^{'}}}(0)}\frac{\partial{u}}{\partial{\nu}}G_{\rho}(x, y)dS(y)-
\displaystyle\int_{\partial{B_{R^{'}}}(0)}\frac{\partial{G_{\rho}(x, y)}}{\partial{\nu(y)}}udS(y)\\
&+\displaystyle\int_{\mathbb{R}^n}\nabla 
{\gamma^{1/2}} \cdot \nabla(\frac{1}{\gamma^{1/2}}v  G_{\rho}(x, y)) dy,
\end{array}
\end{equation}
where we have used that $-\triangle G_{\rho}^{\varepsilon}(x, y)$ is an approximation of identity and $L^p$ estimate of the convolution type integration about the kernels
of $G_0(x-y)$ and $\nabla G_0(x-y)$.

In view of $-\triangle_{y}G_{\rho}(x, y)=\delta_{x}$ and $\triangle e^{x\cdot\rho}=0$ whenever $\ \rho\cdot\rho=0$, we deduce from Green's formula 
\begin{equation}\label{3.26}
e^{x\cdot\rho}=\displaystyle\int_{\partial{B_{R^{'}}}(0)}\frac{\partial{e^{x\cdot\rho}}}{\partial{\nu}}G_{\rho}(x, y)dS(y)-
\displaystyle\int_{\partial{B_{R^{'}}}(0)}\frac{\partial{G_{\rho}(x, y)}}{\partial{\nu(y)}}e^{x\cdot\rho}dS(y).
\end{equation}
Since $u=\psi(x, \rho)$ in $\mathbb{R}^n\backslash {B}_{R}(0)$ and Proposition \ref{3.2} implies that $u-e^{x\cdot\rho}$ satisfies \eqref{3.1}, we can deduce from
\eqref{3.26}
\begin{equation}\label{3.27}
 \lim\limits_{R^{'} \rightarrow \infty}\biggl(\displaystyle\int_{\partial{B_{R^{'}}}(0)}\frac{\partial{u}}{\partial{\nu}}G_{\rho}(x, y)dS(y)-
\displaystyle\int_{\partial{B_{R^{'}}}(0)}\frac{\partial{G_{\rho}(x, y)}}{\partial{\nu(y)}}udS(y)\biggr)=e^{x\cdot\rho}.
\end{equation}
Then \eqref{3.25} and \eqref{3.27} imply
\begin{equation}\label{3.28}
v(x)=e^{x\cdot \rho}+\displaystyle\int_{\mathbb{R}^n}\nabla {\gamma^{1/2}} \cdot \nabla(\frac{1}{\gamma^{1/2}}G_{\rho}(x, y)v) dy,\ a.e., x\in \mathbb{R}^n.
 \end{equation}
Then $(b)$ follows.
 
Let $\gamma_{\varepsilon}=J_{\varepsilon}\ast\gamma$, where $J_{\varepsilon}$ is the standard mollifier. Since $\gamma\in Lip (\mathbb{R}^n)$ and $\gamma\equiv1$ in 
$\mathbb{R}^n\backslash B_{R}(0)$, by integration by parts we can deduce from \eqref{3.28}
\begin{equation}\label{3.29}
 v(x)=e^{x\cdot \rho}-\lim\limits_{\varepsilon\rightarrow 0}\displaystyle\int_{\mathbb{R}^n}\frac{\triangle \gamma^{1/2}_{\varepsilon}}{\gamma^{1/2}_{\varepsilon}}
G_{\rho}(x, y)vdy.
\end{equation}
Recalling $v(x)=e^{x\cdot \rho}(1+\Phi(x, \rho))$ we can deduce from \eqref{3.29}
\begin{equation}\label{3.30}
 \Phi(x, \rho)=\lim\limits_{\varepsilon\rightarrow 0}\displaystyle\int_{\mathbb{R}^n} g_{\rho}(x, y)\frac{\triangle \gamma^{1/2}_{\varepsilon}}{\gamma^{1/2}_{\varepsilon}}dy
 +\lim\limits_{\varepsilon\rightarrow 0}\displaystyle\int_{\mathbb{R}^n} g_{\rho}(x, y)
\frac{\triangle \gamma^{1/2}_{\varepsilon}}{\gamma^{1/2}_{\varepsilon}}\Phi(y, \rho)dy,
\end{equation}
where we have used the relations $G_{\rho}(x, y)=G_{\rho}(x-y)$, $g_{\rho}(x, y)=g_{\rho}(x-y)$ and $g_{\rho}(x)=-e^{x\cdot \rho}G_{\rho}(x)$.

Hence, we conclude from \eqref{3.30} that
\begin{equation}\label{3.31}
\Phi(x, \rho)=\displaystyle\int_{\mathbb{R}^n} g_{\rho}(x, y)\frac{\triangle \gamma^{1/2}}{\gamma^{1/2}}dy +\displaystyle\int_{\mathbb{R}^n} g_{\rho}(x, y)\frac{\triangle \gamma^{1/2}}{\gamma^{1/2}}
\Phi(y, \rho)dy
\end{equation}
as a tempered distribution.

In fact, 
\begin{equation}\label{3.32}
 \biggl(\displaystyle\int_{\mathbb{R}^n} g_{\rho}(x, y)q(y)dy\biggr)^{\hat{}}(\xi) = \frac{1}{-|\xi|^2+2i\rho \cdot \xi}
\biggl(\frac{\triangle \gamma^{1/2}}{\gamma^{1/2}}\biggr)^{\hat{}}(\xi),
\end{equation}
and 
\begin{equation}\label{3.33}
  \biggl(\displaystyle\int_{\mathbb{R}^n} g_{\rho}(x, y)q(y)\Phi(y, \rho)dy\biggr)^{\hat{}}(\xi) = \frac{1}{-|\xi|^2+2i\rho \cdot \xi}
\biggl(\frac{\triangle \gamma^{1/2}}{\gamma^{1/2}}\Phi(y, \rho)\biggr)^{\hat{}}(\xi).
\end{equation}

For $\gamma\in Lip (\mathbb{R}^n)$ and $\gamma\equiv1$ in 
$\mathbb{R}^n\backslash B_{R}(0)$ and $\Phi(x, \rho)\in H^1_{loc}(\mathbb{R}^n)$ from the results of the paper of \cite{HT1}, we know \eqref{3.31}, \eqref{3.32} and 
\eqref{3.33} all make sense. Then $(c)$ follows.
\end{proof}
In Lemma \ref{l3.3}, we proved the identity \eqref{3.7}. We mentioned that $\int_{\mathbb{R}^n} g_{\rho}(x, y)q(y)dy  $ and $\int_{\mathbb{R}^n} g_{\rho}(x, y)q(y)\Phi(y, \rho)dy $ are tempered distributions.  In fact in the paper \cite{HT1} by Habermann and Tataru, they showed 
$\int_{\mathbb{R}^n} g_{\rho}(x, y)q(y)dy \in \dot{X}_{\rho}^{\frac{1}{2}}$ and $\int_{\mathbb{R}^n} g_{\rho}(x, y)q(y)$ $\Phi(y, \rho)dy\in \dot{X}_{\rho}^{\frac{1}{2}}$.
We are now in the position to introduce the definition of Bourgain's space $\dot{X}_{\rho}^{b}$ (see \cite{Bou}). Following the idea of Haberman and Tataru we define the space $\dot{X}_{\rho}^{b}$ by the norm 
\[
 \|u\|_{\dot{X}_{\rho}^{b}}=\||p_{\rho}(\xi)|^b\hat{u}(\xi)\|_{L^2},
\]
where $p_{\rho}(\xi)=-|\xi|^2+2i\rho \cdot \xi$ is the symbol of $\triangle_{\rho}:=\triangle+2\rho\cdot \nabla$.

Now given $k\in \mathbb{R}^n$, consider $P$ a 2-dimensional linear subpace orthogonal to $k$ and set 
\begin{equation}\label{ro1}
 \rho_{1}=s\eta_{1} +i \left(\frac{k}{2}+r\eta_{2}\right),
\end{equation}
\begin{equation}\label{ro2}
 \rho_{2}=-s\eta_{1} +i \left(\frac{k}{2}-r\eta_{2}\right),
\end{equation}
where $\eta_{1}, \eta_{2}\in S^{n-1}$ satisfy $(k, \eta_{1})=(k, \eta_{2})=(\eta_{1}, \eta_{2})=0$ and $\frac{|k|^2}{4}+r^{2}=s^{2}$. We have that $\eta_1$ can be chosen to be $\eta_1\in P\cap S^{n-1}$ (for later references set $ S := P \cap S^{n-1} $) and $ \eta_2 $ is the unique vector making $ \{ \eta_1, \eta_2 \} $ a positively oriented orthonormal basis of $ P $. The vectors are chosen so that $\rho_{i}\cdot \rho_{i}=0, i=1, 2.$ and $\rho_{1}+\rho_{2}=ik$.

We can prove the following result.
\begin{theorem}\label{t3.4}
 Let $\gamma(x)\in Lip(B_{R}(0))$ be a real valued function and assume that $\gamma(x)\geq C_0>0$ with $\gamma(x)\equiv 1$ near the boundary $\partial{B}_{R}(0)$
. Then there exists a constant $\varepsilon_{n, R}$ such that  if $\gamma$ satisfies either $\|\nabla log\gamma\|\leq \varepsilon_{n, R}$ or 
$\gamma \in C^{1}(\bar{B}_{R}(0))$, then the following properties hold

(a)\ $S_{\rho}\Lambda_{\gamma}-B_{\rho} -\frac{1}{2}I$ is a compact operator on $H^{\frac{3}{2}}(\partial{B}_{R}(0))$.

(b)\ For any $\rho\cdot\rho=0, \rho \in \mathbb{C}^{n}$, when $|\rho|$ is large enough, then there exists a unique $f_{\rho}\in H^{\frac{3}{2}}(\partial{B}_{R}(0))$ such 
that 
\[
 f_{\rho}=e^{x\cdot \rho}-(S_{\rho}\Lambda_{\gamma}-B_{\rho} -\frac{1}{2}I)f_{\rho}.
\]

(c)\ Let us consider $\rho_i=\rho_i(s,\eta_1)$ as above. We have that $u=\gamma^{-1/2}
e^{x\cdot \rho_i}(1+\Phi(x, \rho_i))$ are solutions to 
\begin{equation*}
\divergence(\gamma\nabla u)=0, \text{in} \ \mathbb{R}^n.
\end{equation*}
Moreover, for a fixed $k\in\mathbb{R}^n$ and $P$ as above we have that for sufficiently large $\lambda$,
\[
 \frac{1}{\lambda}\int_S\int_{\lambda}^{2\lambda}\|\Phi(x, \rho_i)\|_{\dot{X}_{\rho_i}^{\frac{1}{2}}} ds d\eta_1\to 0,
\]
\[
 \frac{1}{\lambda}\int_S\int_{\lambda}^{2\lambda} \|\eta_{B}q\|_{\dot{X}_{\rho_i}^{-\frac{1}{2}}} ds d\eta_1\to 0,
\]
where $\eta_{B}$ is a smooth function with compact support. 

\end{theorem}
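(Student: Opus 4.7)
The plan is to reduce each of (a), (b), (c) to machinery already in place: the jump relations of Proposition \ref{p3.1}, the Fredholm alternative, the correspondence in Proposition \ref{p3.2}, the representation in Lemma \ref{l3.3}, and the $\dot{X}^{1/2}_{\rho}$ estimates of Haberman--Tataru \cite{HT1}.

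For (a), I would compare $\Lambda_{\gamma}$ with the harmonic Dirichlet-to-Neumann map $\Lambda_{0}$ of the unit conductivity on $B_{R}(0)$. A direct calculation using Green's formula together with Proposition \ref{p3.1}(c),(e) for harmonic layer potentials gives the identity $S_{\rho}\Lambda_{0} = B_{\rho} + \tfrac{1}{2}I$ on $H^{3/2}(\partial B_{R}(0))$. Hence
\begin{equation*}
S_{\rho}\Lambda_{\gamma} - B_{\rho} - \tfrac{1}{2}I \;=\; S_{\rho}\bigl(\Lambda_{\gamma} - \Lambda_{0}\bigr).
\end{equation*}
Because $\gamma \equiv 1$ on an open neighborhood of $\partial B_{R}(0)$, interior elliptic regularity applied to the difference $u_{\gamma} - u_{0}$ (with common Dirichlet data) in that neighborhood shows that $\Lambda_{\gamma} - \Lambda_{0}$ gains one derivative on the boundary. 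Combined with the mapping $S_{\rho}\colon H^{1/2}\to H^{3/2}$ from Proposition \ref{p3.1}, this gives a map $H^{3/2}\to H^{5/2}$, so Rellich's theorem yields compactness on $H^{3/2}(\partial B_{R}(0))$.

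For (b), the existence follows from the Fredholm alternative via (a), once injectivity is established. Suppose $f_{\rho}$ satisfies the homogeneous equation. Then Proposition \ref{p3.2}(b) produces an exterior solution $\psi$ solving \eqref{3.2} with $e^{x\cdot\rho}$ replaced by $0$, and Lemma \ref{l3.3} extends it to $u\in H^{2}_{loc}(\mathbb{R}^{n})$ with $\divergence(\gamma\nabla u)=0$ in the whole space, together with a remainder $\Phi$ satisfying the homogeneous version
\begin{equation*}
\Phi(x,\rho) \;=\; \int_{\mathbb{R}^{n}} g_{\rho}(x,y)\,q(y)\,\Phi(y,\rho)\,dy
\end{equation*}
as a tempered distribution, and in fact in $\dot{X}^{1/2}_{\rho}$ by the arguments of \cite{HT1}. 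Under either regularity assumption on $\gamma$, Haberman--Tataru show that the operator norm of $\Phi\mapsto \int g_{\rho}(\cdot,y)q(y)\Phi\,dy$ on $\dot{X}^{1/2}_{\rho}$ is strictly less than $1$ when $|\rho|$ is large enough. Hence $\Phi\equiv 0$, so $u\equiv 0$, and therefore $f_{\rho}=\psi|_{\partial B_{R}(0)}=0$, giving uniqueness and the existence of the desired $f_{\rho}$.

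For (c), once $f_{\rho_{i}}$ is produced by (b), Lemma \ref{l3.3} gives $u=\gamma^{-1/2}e^{x\cdot\rho_{i}}(1+\Phi(x,\rho_{i}))$ solving $\divergence(\gamma\nabla u)=0$ in $\mathbb{R}^{n}$, with $\Phi$ satisfying \eqref{3.7}. The uniqueness part of the Haberman--Tataru contraction in $\dot{X}^{1/2}_{\rho}$ then identifies this $\Phi$ with their CGO remainder, and the averaged decay of $\|\Phi(\cdot,\rho_{i})\|_{\dot{X}^{1/2}_{\rho_{i}}}$ and of $\|\eta_{B}q\|_{\dot{X}^{-1/2}_{\rho_{i}}}$ over $s\in[\lambda,2\lambda]$ and $\eta_{1}\in S$ is transferred directly from their theorem. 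The main obstacle is really the compactness claim in (a): in the merely $C^{1}$ or Lipschitz setting the pseudodifferential calculus is not directly available, and the gain-of-regularity for $\Lambda_{\gamma}-\Lambda_{0}$ must be implemented by hand using a cutoff in the annulus $\{\gamma\equiv 1\}$ together with the $L^{2}$--nontangential estimates of Proposition \ref{p1.3} applied to $u_{\gamma}-u_{0}$.
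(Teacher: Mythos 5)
Your parts (b) and (c) follow essentially the paper's own argument: Fredholm alternative reduced to injectivity of the homogeneous equation, extension of the homogeneous boundary solution to a global solution via Proposition \ref{p3.2} and the construction of Lemma \ref{l3.3}, membership of the remainder in $\dot{X}_{\rho}^{\frac{1}{2}}$, the Haberman--Tataru contraction for $|\rho|$ large forcing $\tilde{\Phi}\equiv 0$, and then, for (c), identifying $\Phi$ with the Haberman--Tataru remainder through the integral equation \eqref{3.7} and quoting their averaged estimates (their Lemma 3.1 and Theorem 4.1). For (a), however, you take a genuinely different route. The paper derives, from Green's formula applied to the solution with data $g$ and the jump relations of Proposition \ref{p3.1}, the identity \eqref{3.34}, i.e.\ it factorizes $S_{\rho}\Lambda_{\gamma}-B_{\rho}-\frac{1}{2}I = T\,G_{\rho}\,\Xi\, P_{\gamma}$ with $P_{\gamma}:H^{3/2}(\partial B_{R})\to H^{2}(B_{R})$ bounded, $\Xi:u\mapsto \nabla\log\gamma\cdot\nabla u$ compact from $H^{2}(B_{R})$ to $L^{2}(B_{R})$ by Rellich, $G_{\rho}:L^{2}\to H^{2}$ bounded, and $T$ the trace; the compactness lives in the interior factor $\Xi$ and does not rely on $\gamma\equiv 1$ near the boundary beyond identifying the co-normal with the normal derivative. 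You instead use the harmonic identity $S_{\rho}\Lambda_{0}=B_{\rho}+\frac{1}{2}I$ to reduce to $S_{\rho}(\Lambda_{\gamma}-\Lambda_{0})$ and place the compactness in the smoothing of the difference of Dirichlet-to-Neumann maps --- this is Nachman's original mechanism, and it works here precisely because $\gamma\equiv 1$ in a fixed neighbourhood of the smooth sphere, where $u_{\gamma}-u_{0}$ is harmonic with vanishing Dirichlet data, so boundary elliptic regularity for the Laplacian gives (in fact arbitrarily strong) smoothing with no pseudodifferential calculus. Both approaches are correct; the paper's factorization is more robust (it would survive without the ``$\gamma\equiv1$ near $\partial B_{R}$'' normalization), while yours makes the role of that normalization transparent and is closer to \cite{Na1}. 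One small repair in your assembly: Proposition \ref{p3.1} only provides $S_{\rho}:H^{1/2}(\partial B_{R})\to H^{3/2}(\partial B_{R})$, so your step ``$H^{3/2}\to H^{5/2}$'' invokes a mapping property of $S_{\rho}$ on $H^{3/2}$ densities that you have not justified in this low-regularity framework. It is cleaner to note that $\Lambda_{\gamma}-\Lambda_{0}:H^{3/2}(\partial B_{R})\to H^{1/2}(\partial B_{R})$ is already compact (by the regularity argument it factors through $H^{s}(\partial B_{R})$ for large $s$, followed by the compact embedding into $H^{1/2}$ on the compact boundary) and then compose with the bounded map $S_{\rho}:H^{1/2}\to H^{3/2}$ actually supplied by Proposition \ref{p3.1}; with that rearrangement your proof of (a) is complete.
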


\begin{proof}
 For $(a)$ let $g\in H^{\frac{3}{2}}(\partial{B}_{R}(0))$ and consider $u\in H^{2}(B_{R}(0))$ solution to 
\[
\left\{
\renewcommand{\arraystretch}{1.25}
\begin{array}{lll}
\divergence(\gamma\nabla u)=0,\ \ \ \ \text{in}\ B_{R}(0),\\
 u$\textbar$_{\partial{B}_R(0)}=g.
\end{array}
\right.
\]
Let $x\in \Omega$ and apply Green's formula to $G_{\rho}^{\varepsilon}(x, y)$ in $\Omega$. After taking the limit as $\varepsilon$ tends to zero we obtain
\[
\renewcommand{\arraystretch}{1.25}
\begin{array}{lll}
\displaystyle\int_{B_{R}(0)} G_{\rho} (x, y) \triangle u dy + u(x) &= \displaystyle\int_{\partial{B}_{R}(0)} G_{\rho} (x, y)\frac{\partial u}{\partial\nu}dS(y)
-\displaystyle\int_{\partial{B}_{R}(0)} G_{\rho} (x, y)\frac{\partial G_{\rho} (x, y)}{\partial\nu} g dS(y)\\
&=(S_{\rho}\Lambda_{\gamma}-D_{\rho})g(x).
\end{array}
\]
Letting $x\rightarrow \partial{B}_{R}(0) $ (nontangential from inside $B_{R}(0)$ ), we deduce from Proposition \ref{p3.1}
\begin{equation}\label{3.34}
(S_{\rho}\Lambda_{\gamma}-B_{\rho} -\frac{1}{2}I)g = T \displaystyle\int_{B_{R}(0)} G_{\rho} (x, y)\nabla log \gamma \cdot \nabla u dy, 
\end{equation}
where $T$ denotes the trace operator.

Since $\gamma \in Lip(B_{R}(0))$ from the estimates of the uniformly elliptic equation we know that the operator
 $ P_{\gamma}: g \in H^{\frac{3}{2}}(\partial{B}_{R}(0))\rightarrow u\in H^{2}(B_{R}(0))$ is bounded. Furthermore Rellich's compact embedding theorem 
implies that the operator $\Xi: u\in H^{2}(B_{R}(0))\rightarrow  \nabla log \gamma \cdot \nabla u$ is compact and Calder\'{o}n-Zygmund estimates imply that the operator
$G_{\rho}: f\in L^2(B_{R}(0)\rightarrow G_{\rho}f\in H^{2}(B_{R}(0))$ is bounded. On the other hand, the trace operator $T: H^{2}(B_{R}(0))\rightarrow 
H^{\frac{3}{2}}(\partial{B}_{R}(0))$ is bounded. Summing up  the above analysis we can deduce from \eqref{3.34} that
$S_{\rho}\Lambda_{\gamma}-B_{\rho} -\frac{1}{2}I=TG_{\rho}\Xi P_{\gamma}$
is a compact operator on $ H^{\frac{3}{2}}(\partial{B}_{R}(0))$. Then $(a)$ follows.

To prove $(b)$ by Fredholm alternative theorem we just need to show that the homogeneous equation 
\begin{equation}\label{3.35}
 f_{\rho}=(-S_{\rho}\Lambda_{\gamma}+B_{\rho} +\frac{1}{2}I) f_{\rho}
\end{equation}
only has the zero solution. For any $g\in H^{\frac{3}{2}}(\partial{B}_{R}(0))$ satisfying \eqref{3.35} repeating the steps of the proof of Lemma \ref{l3.3} we can find
a solution $u(x, \rho)=\gamma^{-1/2}
e^{x\cdot \rho}\tilde{\Phi}(x, \rho) \in H^2_{loc}(\mathbb{R}^n)$ to 
\[
\renewcommand{\arraystretch}{1.25}
\begin{array}{lll}
\divergence(\gamma\nabla u)=0,\ \ \ \ \text{in}\ \mathbb{R}^n,
\end{array}
\]
with $u$\textbar$_{\partial{B}_R(0)}=g$ and 
\begin{equation}\label{3.36}
 \tilde{\Phi}(x, \rho)=\displaystyle\int_{\mathbb{R}^n} g_{\rho}(x, y)q(y)\tilde{\Phi}(y, \rho)dy.
\end{equation}
Noting $\tilde{\Phi}(x, \rho) \in H^1_{loc}(\mathbb{R}^n)$ and $q=\frac{\triangle \gamma^{1/2}}{\gamma^{1/2}}$ with compact
support, by dual method we can show 
\begin{equation}\label{3.37}
q(y)\tilde{\Phi}(y, \rho)\in \dot{X}_{\rho}^{-\frac{1}{2}}.
 \end{equation}
Then \eqref{3.36} and \eqref{3.37} imply $\tilde{\Phi}(x, \rho) \in \dot{X}_{\rho}^{\frac{1}{2}} $.

Under our assumptions: $\gamma\in Lip(B_{R}(0))$ and $\|\nabla log\gamma\|\leq \varepsilon_{n, R}$ or $\gamma\in C^1(\bar{B}_{R}(0))$,  when
$\rho$ is large enough Haberman and Tataru \cite{HT1} showed the operator :
\[
 f(x)\in \dot{X}_{\rho}^{\frac{1}{2}} \longrightarrow \displaystyle\int_{\mathbb{R}^n} g_{\rho}(x, y)q(y)f(y)dy \in \dot{X}_{\rho}^{\frac{1}{2}}
\]
is a contraction. Therefore it follows from \eqref{3.36} that $\tilde{\Phi}(x, \rho)\equiv 0$. Hence $u\equiv 0$ and $g\equiv 0$. Then $(b)$ follows.

In view of $(b)$, for any $\rho\cdot\rho=0, \rho \in \mathbb{C}^{n}$, when $|\rho|$ is large enough, then there exists a unique $f_{\rho}\in H^{\frac{3}{2}}(\partial{B}_{R}(0))$ such 
that 
\[
 f_{\rho}=e^{x\cdot \rho}-(S_{\rho}\Lambda_{\gamma}-B_{\rho} -\frac{1}{2}I)f_{\rho}.
\]
Now Lemma \ref{l3.3} implies that $u(x)=\gamma^{-1/2}
e^{x\cdot\rho}(1+\Phi(x, \rho))$ is a  solution to 
\begin{equation*}
\divergence(\gamma\nabla u)=0, \ \text{in} \ \mathbb{R}^n,
\end{equation*}
and 
\begin{equation} \label{3.38}
\Phi(x, \rho)=\displaystyle\int_{\mathbb{R}^n} g_{\rho}(x, y)q(y)dy +\displaystyle\int_{\mathbb{R}^n} g_{\rho}(x, y)q(y)\Phi(y, \rho)dy.
\end{equation} 
Finally from \eqref{3.38} under our assumptions about $\gamma$, Lemma $3.1$ and Theorem $4.1$ in \cite{HT1} imply that $(c)$ follows.
\end{proof}

\section{Reconstruction of the Conductivity $\gamma$}

\begin{theorem}\label{t4.1}  
 Under the assumptions of Theorem \ref{t3.4} $\gamma$ can be recovered from the knowledge of $\Lambda_{\gamma}$ at the boundary $\partial{B}_{R}(0)$.
\end{theorem}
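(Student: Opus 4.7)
The strategy follows Nachman's scheme adapted to the low-regularity CGO framework: use the CGO solutions of Theorem~\ref{t3.4} to recover $\hat q(k)$ for every $k\in\mathbb R^n$, with $q=\triangle\gamma^{1/2}/\gamma^{1/2}$, and then recover $\gamma^{1/2}$ from $q$ by solving a well-posed Dirichlet problem. Throughout we work in the reduced setting of Section~2, so that $\gamma\equiv 1$ in a neighborhood of $\partial B_R(0)$ and the effective DtN map on $\partial B_R(0)$ is already determined by the original $\Lambda_\gamma$.

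Fix $k\in\mathbb R^n$ and pick $\rho_1=\rho_1(s,\eta_1)$ and $\rho_2=\rho_2(s,\eta_1)$ as in \eqref{ro1}--\eqref{ro2}, so that $\rho_j\cdot\rho_j=0$ and $\rho_1+\rho_2=ik$. For $s$ large, Theorem~\ref{t3.4}(b) produces a unique $f_{\rho_1}\in H^{3/2}(\partial B_R(0))$ solving \eqref{3.3}; this $f_{\rho_1}$ is \emph{computable from $\Lambda_\gamma$}, since it is the fixed point of an equation whose data involve only $\Lambda_\gamma$ together with the explicit operators $S_\rho$ and $B_\rho$. The corresponding function $u=\gamma^{-1/2}e^{x\cdot\rho_1}(1+\Phi(x,\rho_1))$ from Theorem~\ref{t3.4}(c) is a solution of $\divergence(\gamma\nabla u)=0$ in $\mathbb R^n$, and $v_1:=\gamma^{1/2}u=e^{x\cdot\rho_1}(1+\Phi(x,\rho_1))$ satisfies $-\triangle v_1+qv_1=0$ in $B_R(0)$. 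Since $v_2:=e^{x\cdot\rho_2}$ is harmonic and $\gamma\equiv 1$ near $\partial B_R(0)$, so that $v_1|_{\partial B_R(0)}=f_{\rho_1}$ and $\partial_\nu v_1|_{\partial B_R(0)}=\Lambda_\gamma f_{\rho_1}$, Green's identity in $B_R(0)$ yields
\begin{equation}\label{eq:key}
\int_{B_R(0)}q(x)\,e^{ix\cdot k}(1+\Phi(x,\rho_1))\,dx=\int_{\partial B_R(0)}\left(\Lambda_\gamma f_{\rho_1}\cdot e^{x\cdot\rho_2}-f_{\rho_1}\frac{\partial e^{x\cdot\rho_2}}{\partial\nu}\right)dS(x).
\end{equation}
The right-hand side is manifestly computable from $\Lambda_\gamma$, so recovering $\hat q(k)$ reduces to showing that the remainder $\int q\,e^{ix\cdot k}\Phi(\cdot,\rho_1)\,dx$ vanishes in a suitable asymptotic sense.

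The main obstacle is precisely this error term. Averaging \eqref{eq:key} over $s\in[\lambda,2\lambda]$ and $\eta_1\in S$ as in Theorem~\ref{t3.4}(c), and choosing a smooth cutoff $\eta_B$ equal to $1$ on the (compact) support of $q$, a bilinear pairing in Bourgain's spaces of Haberman--Tataru type gives
\[
\left|\int q(x)\,e^{ix\cdot k}\Phi(x,\rho_1)\,dx\right|\lesssim\|\eta_B q\|_{\dot X_{\rho_1}^{-1/2}}\,\|\Phi(\cdot,\rho_1)\|_{\dot X_{\rho_1}^{1/2}}.
\]
Applying Cauchy--Schwarz to the two averages furnished by Theorem~\ref{t3.4}(c) shows that this averaged remainder tends to $0$ as $\lambda\to\infty$. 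Consequently, for a.e.\ $(s,\eta_1)$ in the averaging set the left-hand side of \eqref{eq:key} converges along some sequence $\lambda_n\to\infty$ to $\hat q(k)$, and $\hat q(k)$ has been extracted explicitly from $\Lambda_\gamma$. Varying $k\in\mathbb R^n$ determines $q$ as a compactly supported tempered distribution on $B_R(0)$.

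Finally, with $q$ in hand, we reconstruct $\gamma^{1/2}$ by solving the Dirichlet problem $(-\triangle+q)w=0$ in $B_R(0)$ with $w|_{\partial B_R(0)}=1$: $\gamma^{1/2}$ itself is a positive solution with this boundary value (because $\gamma\equiv 1$ near $\partial B_R(0)$), so $0$ is not a Dirichlet eigenvalue of $-\triangle+q$ on $B_R(0)$, the boundary value problem is uniquely solvable, and its solution must equal $\gamma^{1/2}$. Squaring yields $\gamma$ throughout $B_R(0)$, and in particular in $\Omega$. The heart of the argument is the averaged decay of the error in \eqref{eq:key}, which is exactly where Theorem~\ref{t3.4}(c) (and with it the $C^1$ or small-$|\nabla\log\gamma|$ assumption on $\gamma$) enters in an essential way.
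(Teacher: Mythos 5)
Your proposal follows essentially the same route as the paper: solve the boundary integral equation of Theorem \ref{t3.4}(b) to obtain $f_{\rho_1}$ from $\Lambda_\gamma$, pair the resulting CGO solution with $e^{x\cdot\rho_2}$ to get an identity whose right-hand side is computable from the boundary data, average over $(s,\eta_1)\in[\lambda,2\lambda]\times S$ and invoke Theorem \ref{t3.4}(c) to make the remainder vanish, recover $\hat q(k)$ for all $k$, and finally pass from $q$ back to $\gamma$. Two deviations deserve comment. First, you derive the key identity by applying Green's identity to $-\triangle v_1+qv_1=0$ and write $\int_{B_R(0)}q\,e^{ix\cdot k}(1+\Phi)\,dx$ as an honest integral; but for $\gamma$ only Lipschitz or $C^1$, $q=\triangle\gamma^{1/2}/\gamma^{1/2}$ is merely a distribution and $v_1\in H^1_{loc}$, so this step is formal. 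The paper circumvents exactly this by using Knudsen's integral identity (Proposition \ref{p4.2}), which involves only $\nabla\gamma^{1/2}$, and then mollifying with $\gamma_\varepsilon$ and passing to the limit, landing directly on the rigorous form $\hat q(k)+\langle \tfrac{\triangle\gamma^{1/2}}{\gamma^{1/2}}e^{ix\cdot k}\varphi_B,\Phi\rangle$ interpreted as a $\dot X^{1/2}_{\rho_1}$--$\dot X^{-1/2}_{\rho_1}$ pairing; your argument needs this (or an equivalent regularization) to be complete, although you do implicitly adopt the correct pairing when estimating the remainder. Relatedly, ``Cauchy--Schwarz on the two averages'' does not literally apply to the averages in Theorem \ref{t3.4}(c), which are of first powers of the norms; what closes the estimate (in the paper's proof as well) is the uniform bound $\|\Phi\|_{\dot X^{1/2}_{\rho_1}}\lesssim\|\eta_Bq\|_{\dot X^{-1/2}_{\rho_1}}\lesssim 1$ from Haberman--Tataru, after which the decay of a single averaged factor suffices; also, the averaging makes the detour through ``a.e.\ $(s,\eta_1)$ along a subsequence'' unnecessary, since $\hat q(k)$ is independent of $(s,\eta_1)$.

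Second, your last step recovers $\gamma^{1/2}$ as the unique solution of $(-\triangle+q)w=0$ in $B_R(0)$ with $w|_{\partial B_R(0)}=1$, using positivity of $\gamma^{1/2}$ to exclude a zero Dirichlet eigenvalue (Nachman's original device, justified by the ground-state transform back to $\divergence(\gamma\nabla\cdot)$), whereas the paper solves the nonlinear problem \eqref{4.7} for $w=\log\sqrt{\gamma}$ with zero boundary data. The two are equivalent, and your linear version is arguably cleaner; but since $q$ is only distributional you should state in which weak sense the equation and its unique solvability are understood, e.g.\ via the form $\int\nabla w\cdot\nabla\phi\,dx-\int\nabla\gamma^{1/2}\cdot\nabla\bigl(w\phi/\gamma^{1/2}\bigr)\,dx$ on $H^1$. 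With these points repaired, your argument is a correct proof along the paper's lines.
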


Before proving Theorem \ref{t4.1} we first recall an integral identity appearing in Lemma $4.1$ in \cite{Knu}.

\begin{proposition}\label{p4.2}
Suppose $\gamma_{i}\in C^1(\bar{\Omega}), i=1, 2.$ and $u_1, u_2\in H^1(\Omega)$
satisfy $\nabla\cdot(\gamma_i\nabla u_i)=0$ in $\Omega$. Suppose further that $\tilde{ u}_1\in H^1(\Omega)$
satisfies $\nabla\cdot(\gamma_1\nabla \tilde{ u}_1)=0$ with $\tilde{ u}_1=u_2$ on $\partial \Omega$. Then
 \[
 \begin{array}{lll}
\displaystyle\int_{\Omega}(\gamma_1^{\frac{1}{2}}\nabla \gamma_2^{\frac{1}{2}}-\gamma_2^{\frac{1}{2}}\nabla \gamma_1^{\frac{1}{2}})\cdot \nabla (u_1 u_2)dx=
\displaystyle\int_{\partial \Omega}\gamma_1 \partial_{\nu}(\tilde{ u}_1-u_2)u_1ds,
\end{array}
\]
where the integral on the boundary is understood in the sense of the dual pairing between $H^{\frac{1}{2}}(\partial \Omega)$ and $H^{-\frac{1}{2}}(\partial \Omega)$.
\end{proposition}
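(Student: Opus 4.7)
The plan is to prove the identity by integration by parts, leveraging the two conductivity equations. Set $F=\gamma_1^{1/2}\nabla\gamma_2^{1/2}-\gamma_2^{1/2}\nabla\gamma_1^{1/2}$ and $\Phi=\tilde u_1-u_2\in H^1_0(\Omega)$. First I would handle the right hand side. Because $u_2$ solves $\nabla\cdot(\gamma_2\nabla u_2)=0$, a direct computation gives $\nabla\cdot(\gamma_1\nabla u_2)=\gamma_2\nabla(\gamma_1/\gamma_2)\cdot\nabla u_2$, which lies in $L^2(\Omega)$ since no second derivative falls on $\gamma_1$; consequently $\gamma_1\partial_\nu\Phi\in H^{-1/2}(\partial\Omega)$ and the boundary pairing is well defined. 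Green's identity combined with $\nabla\cdot(\gamma_1\nabla u_1)=0$ then collapses the right hand side to $-\int_\Omega u_1\,\nabla\cdot(\gamma_1\nabla u_2)\,dx$.

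The key algebraic step is to bring $F$ into the expression. Using $\nabla\gamma_i=2\gamma_i^{1/2}\nabla\gamma_i^{1/2}$ one factors $\gamma_2\nabla\gamma_1-\gamma_1\nabla\gamma_2=-2\gamma_1^{1/2}\gamma_2^{1/2}F$, which yields the pointwise identities
$$F\cdot\nabla u_2=-\tfrac{\gamma_2^{1/2}}{2\gamma_1^{1/2}}\nabla\cdot(\gamma_1\nabla u_2),\qquad F\cdot\nabla u_1=\tfrac{\gamma_1^{1/2}}{2\gamma_2^{1/2}}\nabla\cdot(\gamma_2\nabla u_1),$$
the second derived by interchanging the roles and using $\nabla\cdot(\gamma_1\nabla u_1)=0$. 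Now in the left hand side split $\nabla(u_1u_2)=u_1\nabla u_2+u_2\nabla u_1$ and substitute these identities. Integrating by parts each resulting integral against the weights $u_i\gamma_{3-i}^{1/2}/(2\gamma_i^{1/2})$, a short computation (in which the gradient of the weight produces $\tfrac12\gamma_1^{1/2}\gamma_2^{1/2}\nabla u_j\pm\tfrac12 u_j F$) shows that the $F$-terms recombine into $\tfrac12$ LHS, which is absorbed back to the left side, while the $\gamma_1^{1/2}\gamma_2^{1/2}\nabla u_1\cdot\nabla u_2$ bulk terms cancel pairwise. One is left with the boundary identity
$$\text{LHS}=\int_{\partial\Omega}\gamma_1^{1/2}\gamma_2^{1/2}(u_2\partial_\nu u_1-u_1\partial_\nu u_2)\,ds.$$

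To match this with the stated right hand side, I invoke self-adjointness of the $\gamma_1$-DtN map together with $\tilde u_1|_{\partial\Omega}=u_2$, which yields $\int_{\partial\Omega}\gamma_1\partial_\nu\tilde u_1\,u_1\,ds=\int_{\partial\Omega}\gamma_1\partial_\nu u_1\,u_2\,ds$, so that the RHS rewrites as $\int_{\partial\Omega}\gamma_1(u_2\partial_\nu u_1-u_1\partial_\nu u_2)\,ds$. The two boundary expressions coincide under the standing assumption $\gamma_1=\gamma_2$ on $\partial\Omega$, which in the present context is provided by Section~2, where the boundary values of the unknown conductivity have already been recovered and both candidates share them. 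The main obstacle throughout is the low regularity: since $\gamma_i\in C^1$ only, $\Delta\gamma_i^{1/2}$ is not pointwise defined, so every integration by parts must be routed through $\nabla\cdot(\gamma_i\nabla u_j)\in L^2(\Omega)$ rather than through any quantity demanding two derivatives on $\gamma$; the factorization $\gamma_2\nabla\gamma_1-\gamma_1\nabla\gamma_2=-2\gamma_1^{1/2}\gamma_2^{1/2}F$ is precisely what enables this, and careful bookkeeping of the $\gamma_1^{1/2}/\gamma_2^{1/2}$ weights is what makes the bulk cancellations exact.
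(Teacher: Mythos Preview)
The paper does not prove this proposition; it simply quotes it as Lemma~4.1 of Knudsen \cite{Knu}, so there is no in-paper argument to compare against. Your derivation is correct: the integration-by-parts scheme with the weights $\gamma_{3-i}^{1/2}/(2\gamma_i^{1/2})$ indeed yields
\[
\int_\Omega F\cdot\nabla(u_1u_2)\,dx=\int_{\partial\Omega}\gamma_1^{1/2}\gamma_2^{1/2}\bigl(u_2\,\partial_\nu u_1-u_1\,\partial_\nu u_2\bigr)\,ds,
\]
and the self-adjointness of $\Lambda_{\gamma_1}$ together with $\tilde u_1|_{\partial\Omega}=u_2$ rewrites the right-hand side of the proposition as $\int_{\partial\Omega}\gamma_1\bigl(u_2\,\partial_\nu u_1-u_1\,\partial_\nu u_2\bigr)\,ds$. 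Both identities can be checked directly, and the bulk cancellations you describe are exact.

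You are also right to flag that these two boundary expressions agree only when $\gamma_1=\gamma_2$ on $\partial\Omega$; a one-dimensional example with $\gamma_1\equiv 1$ and $\gamma_2=e^{2x}$ on $(0,1)$ already shows the proposition fails without this hypothesis. In Knudsen's original lemma this boundary equality is part of the assumptions, and in the paper's only use of the proposition (the proof of Theorem~\ref{t4.1}) it holds automatically because $\gamma_1=\gamma$ and $\gamma_2=1$ with $\gamma\equiv 1$ near $\partial B_R(0)$. So the proposition as transcribed here is missing a hypothesis, and your proof is correct for the statement one actually needs.
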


\begin{remark}
 Proposition \ref{p4.2} also holds for $\gamma_i\in Lip({\Omega})$.
\end{remark}

\textbf{Proof of Theorem \ref{t4.1}}

\begin{proof}
Taking  the conductivities $\gamma_1=\gamma$ and $\gamma_2=1$, $\rho_1$ and $\rho_2$ as in \eqref{ro1} and \eqref{ro2} respectively and the solutions $u_1=\gamma^{-1/2}
e^{x\cdot \rho_{1}}(1+\Phi(x, \rho_{1}))$ and $u_2=e^{x\cdot \rho_{2}}$ in Proposition \ref{p4.2}, we have 
\begin{equation}\label{4.1}
 \displaystyle\int_{B_{R}(0)}-\nabla\gamma^{1/2}\cdot \nabla(\gamma^{-1/2}e^{x\cdot \rho_{1}}e^{x\cdot \rho_{2}}(1+\Phi(x, \rho_{1}))dx
=\displaystyle\int_{\partial{B}_{R}(0)}(\Lambda_{\gamma}e^{x\cdot \rho_{2}}-\gamma\frac{\partial{(e^{x\cdot \rho_{2}})}}{\partial{\nu}})u_{1}dS.
\end{equation}

We have that $u_{1}\arrowvert_{\mathbb{R}^n\setminus\bar{B}_{R}(0)}$ is a solution of the exterior problem \eqref{3.2} such that $u_{1}\arrowvert_{\partial{B}_{R}(0)}=f_{\rho_{1}}$. Since $e^{x\cdot \rho_{1}}\arrowvert_{\partial{B}_{R}(0)}$, $S_{\rho_{1}}$, $B_{\rho_{1}}$ are known and $\Lambda_{\gamma}$ and $\gamma$ at $\partial B_{R}(0)$ can be determined, Proposition \ref{p3.2} and $(b)$ of Theorem \ref{t3.4} imply that the right hand side of \eqref{4.1}  is known.  
Since we have that $\rho_{1}+\rho_{2}=ik$ we deduce from \eqref{4.1}
\begin{equation}\label{4.2}
 \displaystyle\int_{B_{R}(0)}-\nabla\gamma^{1/2}\cdot \nabla(\gamma^{-1/2}e^{ix\cdot k}(1+\Phi(x, \rho_{1})))dx
=\displaystyle\int_{\partial{B}_{R}(0)}(\Lambda_{\gamma}e^{x\cdot \rho_{2}}-\gamma\frac{\partial{(e^{x\cdot \rho_{2}})}}{\partial{\nu}})f_{\rho_{1}}dS.
\end{equation}
Let $\gamma_{\varepsilon}$ be as in Lemma \ref{l3.3}. Now integration by parts in the left hand side of \eqref{4.2} gives us
\begin{equation}\label{4.3}
 \lim\limits_{\varepsilon\rightarrow 0}\displaystyle\int_{B_{R}(0)}\frac{\triangle \gamma_{\varepsilon}^{1/2}}{\gamma_{\varepsilon}^{1/2}}e^{ix\cdot k}(1+\Phi(x, \rho_{1}))dx
=\displaystyle\int_{\partial{B}_{R}(0)}(\Lambda_{\gamma}e^{x\cdot \rho_{2}}-\gamma\frac{\partial{(e^{x\cdot \rho_{2}})}}{\partial{\nu}})f_{\rho_{1}}dS.
\end{equation}
Clearly we have 
\begin{equation}\label{4.4}
 \lim\limits_{\varepsilon\rightarrow 0}\displaystyle\int_{B_{R}(0)}\frac{\triangle \gamma_{\varepsilon}^{1/2}}{\gamma_{\varepsilon}^{1/2}}e^{ix\cdot k} 
= \displaystyle\int_{B_{R}(0)}\frac{\triangle \gamma^{1/2}}{\gamma^{1/2}}e^{ix\cdot k}=\hat{q}(k)   
\end{equation}
in the sense of the tempered distribution. 

Since $\|\gamma_{\varepsilon}-\gamma\|_{H^1{(\mathbb{R}^n)}}\rightarrow 0$ as $\varepsilon\rightarrow 0$ it holds
\begin{equation}\label{4.5}
 \lim\limits_{\varepsilon\rightarrow 0}\displaystyle\int_{B_{R}(0)}
\frac{\triangle \gamma_{\varepsilon}^{1/2}}{\gamma_{\varepsilon}^{1/2}}e^{ix\cdot k}\Phi(x, \rho_{1})dx=\langle\frac{\triangle \gamma^{1/2}}{\gamma^{1/2}}
e^{ix\cdot k}\varphi_{B},\  \Phi(x, \rho_{1})\rangle
\end{equation}
where $\langle  \quad \rangle$ means the dual pairing between $\dot{X}_{\rho_{1}}^{\frac{1}{2}}$ and  $\dot{X}_{\rho_{1}}^{-\frac{1}{2}}$ and 
$\varphi_{B}\in C_{0}^{\infty}(\mathbb{R}^n)$ with $\varphi_{B}\equiv1$ in $B_{R}(0)$.
Therefore, we have from \eqref{4.3}, \eqref{4.4} and \eqref{4.5} that the following holds
\begin{equation}\label{fourier}
\hat{q}(k)+\langle\frac{\triangle \gamma^{1/2}}{\gamma^{1/2}}
e^{ix\cdot k}\varphi_{B},\  \Phi(x, \rho_{1})\rangle=\displaystyle\int_{\partial{B}_{R}(0)}(\Lambda_{\gamma}e^{x\cdot \rho_{2}}-\gamma\frac{\partial{(e^{x\cdot \rho_{2}})}}{\partial{\nu}})f_{\rho_{1}}dS.
\end{equation}
For a fixed $k\in\mathbb{R}^n$ and $\lambda\geq |k|$, we next take average of the last identity in $(s,\eta_1)\in [\lambda, 2\lambda]\times S$. We get that
\begin{align}\label{average}
\frac{1}{\lambda}\int_S\int_{\lambda}^{2\lambda}\hat{q}(k)ds d\eta_1+\frac{1}{\lambda}\int_S\int_{\lambda}^{2\lambda}&\langle\frac{\triangle \gamma^{1/2}}{\gamma^{1/2}}
e^{ix\cdot k}\varphi_{B},\  \Phi(x, \rho_{1})\rangle ds d\eta_1\\
&=\frac{1}{\lambda}\int_S\int_{\lambda}^{2\lambda}\int_{\partial{B}_{R}(0)}(\Lambda_{\gamma}e^{x\cdot \rho_{2}}-\gamma\frac{\partial{(e^{x\cdot \rho_{2}})}}{\partial{\nu}})f_{\rho_{1}}dS ds d\eta_1
\nonumber
\end{align}
By (c) of Theorem \ref{t3.4} in Section 3 we have that 
\begin{equation}
\frac{1}{\lambda}\int_S\int_{\lambda}^{2\lambda}\langle\frac{\triangle \gamma^{1/2}}{\gamma^{1/2}}
e^{ix\cdot k}\varphi_{B},\  \Phi(x, \rho_{1})\rangle ds d\eta_1\to 0,
\end{equation}
as $\lambda\to\infty$. Since the right hand side of \eqref{average} is still known and $\hat{q}(k)$ does not depend neither on $s$ nor on $\eta_1$, we have that  $\hat{q}(k)$ is known as a tempered distribution. By inverting the Fourier transform the potential $q(x)$
can be recovered in $\mathbb{R}^n$. From the definition of $q(x)=\frac{\triangle \gamma^{1/2}}{\gamma^{1/2}}$ we know $w=log\sqrt{\gamma}\in H_{0}^{1}(B_{R}(0))$ is a weak 
solution to
\begin{equation}\label{4.7}
\left\{
\renewcommand{\arraystretch}{1.25}
\begin{array}{lll}
 \triangle w+|\nabla w|^2=q, \ \ \ \ \text{in}\ B_{R}(0), \\
 w\arrowvert_{\partial{B}_{R}(0)}=0.
\end{array}
\right.
\end{equation}
The nonlinear Dirichlet problem \eqref{4.7} has a unique solution by the maximum principle of uniform elliptic equations and since we have already recovered $q(x)$ we may construct $\gamma$ in $B_{R}(0)$ by solving the equation \eqref{4.7}.
In other words $\gamma$ can be recovered by the knowledge of $\Lambda_{\gamma}$ at the boundary $\partial{B}_{R}(0)$. Then Theorem \ref{t4.1} follows.

\end{proof}

\section[Appendix by R. M. Brown, A. Garc\'ia and G. Zhang]{Appendix.}

\begin{center}

 \large{Recovering the gradient of a $C^1$-conductivity at the boundary}\\

\small{${}^{c}$R. M. Brown\footnote{
E-mail address:\ ${}^{c}$Russell.Brown@uky.edu\\
This work was partially supported by a grant from the Simons
Foundation (\#195075 to Russell Brown). } \  Andoni Garc\'ia  \   Guo Zhang\\
${}^{c}$Department of Mathematics,  University of Kentucky \\
Lexington, Kentucky 40506, USA}

\end{center}

The goal of this appendix is to give a method for recovering the gradient
of a coefficient from the Dirichlet to Neumann map. The argument is an
extension
of a method from earlier 
work of Brown \cite{Bro1}.

%% Introduce Sobolev space

Throughout this appendix, we let $\Omega$ be a Lipschitz domain as defined
in  \cite{Ve1}, for example, and we let $ \gamma $ denote a
function on $ \bar  \Omega$ that is  continuous and  satisfies  the
ellipticity condition for some $ \lambda >0$, 
\begin{equation}\label{Elliptic}
\lambda \leq \gamma \leq \lambda ^ { -1} .
\end{equation}
We will use $H^s$ to denote the standard scale of $L^2$
Sobolev spaces. 
Given $ f \in H^ {1 /2}( \partial \Omega)$, we may solve the
Dirichlet problem
$$
\left \{ \begin{array}{ll}
\divergence \gamma \nabla u = 0 \qquad & \mbox{in } \Omega\\
u=f , \qquad & \mbox{on } \partial \Omega
\end{array}
\right.
$$
We  define  $ \gamma \partial u / \partial \nu$,  the co-normal
derivative of $u$,  as an element of $H^ {-1/2}( \partial \Omega)$
by
$$
\langle \gamma \frac { \partial u }{ \partial \nu } , \phi \rangle 
= \int _ \Omega \gamma \nabla u \cdot \nabla \phi\,  dy  .
$$
Here, $ \langle \cdot , \cdot \rangle : H^ {-1/2}( \partial
\Omega) \times H^ {1/2}( \partial \Omega) \rightarrow
\mathbb{C}$ is the bilinear pairing of duality and we use $ \phi$ to
denote both a function in $ H^ {1/2 } ( \partial \Omega)$ and an
extension into 
 $\Omega$ which lies
in  $H^ {1}(
\Omega)$. Because $u$ is a solution, the right-hand side depends only
on the boundary values and not on the particular extension chosen for
$ \phi$. 

We give a reformulation of  a result of the author Brown
for recovering the conductivity at the boundary  \cite{Bro1}. 

\begin{theorem}
\label{Brown}
 Let $ \Omega$ be a Lipschitz domain. For almost every $
  x \in \partial \Omega$, we may find a sequence of functions $ f_N$
which are Lipschitz on $ \partial \Omega$, supported in $ \{ y :
|x-y | < N ^ { -1/2}\}$, satisfy 
$$ 
\| f_N \|_ {L^ { 2, s} ( \partial \Omega )} \leq C N^ { s-1/2} ,
\qquad 0 \leq s \leq 1, 
$$
and so that if $ \gamma $ is a continuous function satisfying
(\ref{Elliptic}),  and $u_N$ is   the solution of  the Dirichlet
problem for $ \divergence \gamma \nabla$  with data $f_N$, and 
 $ \psi $  is  a continuous function in $ \bar \Omega$, we have
$$
\psi(x) = \lim_ { N \rightarrow \infty } \int _ { \Omega } \psi (y)
|\nabla u_N (y) |^ 2 \, dy.
$$
\end{theorem}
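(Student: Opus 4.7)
The plan is to construct $f_N$ as a highly oscillatory, boundary-localized bump whose Dirichlet extension $u_N$ has energy density $|\nabla u_N|^2\,dy$ concentrating as a unit Dirac mass at $x$; once this is in place the theorem follows from the continuity of $\psi$.

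First I would reduce to a model situation. For a.e.\ $x\in\partial\Omega$ (those where the defining Lipschitz graph is differentiable in the sense of Rademacher and which are simultaneously Lebesgue points of $\gamma|_{\partial\Omega}$), after a rigid motion $\partial\Omega$ coincides near $x=0$ with the graph $\{y_n=\phi(y')\}$ where $\phi(0)=0$ and $\nabla\phi(0)=0$. Fix a unit tangential direction $\xi_0\in\mathbb{R}^{n-1}$ and a $\eta\in C_c^\infty(\mathbb{R}^{n-1})$ supported in the unit ball. Take
\begin{equation*}
f_N(y)=N^{(n-3)/4}\,\eta(N^{1/2}y')\cos(N\xi_0\cdot y'),
\end{equation*}
read off $\partial\Omega$ through the graph parametrization. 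The support condition $\mathrm{spt}\,f_N\subset\{|y-x|<N^{-1/2}\}$ is immediate, and a direct calculation gives $\|f_N\|_{L^2(\partial\Omega)}\le CN^{-1/2}$ and $\|f_N\|_{L^{2,1}(\partial\Omega)}\le CN^{1/2}$; the intermediate bound $\|f_N\|_{L^{2,s}}\le CN^{s-1/2}$ then follows by interpolation.

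Next I would compare $u_N$ with the explicit model half-space solution $v_N$ defined on $\{y_n>0\}$ with boundary data equal to $f_N$ in the flat coordinates, and admitting the Fourier representation $\widehat{v_N}(\xi',y_n)=\widehat{f_N}(\xi')e^{-|\xi'|y_n}$. Because $\widehat{f_N}$ is concentrated in the annulus $|\xi'|\sim N$, the measure $|\nabla v_N|^2\,dy$ is essentially supported in a box of tangential size $N^{-1/2}$ and normal size $N^{-1}$ at the origin, and a Plancherel calculation yields $\int_{\mathbb{R}^n_+}|\nabla v_N|^2\,dy\to c(\eta,\xi_0)$, a known constant one normalizes to $1$. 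To pass from $v_N$ to the actual solution $u_N$, I would flatten $\phi$ and freeze $\gamma$ at $\gamma(x)$, so that the corrected difference satisfies a perturbed elliptic equation whose inhomogeneity has coefficients $\gamma-\gamma(x)$ (small because $x$ is a Lebesgue point of $\gamma$) or $O(|\nabla\phi|)$ (small on the support of $f_N$ because $\nabla\phi(0)=0$). A standard Dirichlet energy estimate then yields $\int_\Omega|\nabla(u_N-v_N)|^2\,dy\to 0$, so the total energy $\int_\Omega|\nabla u_N|^2\,dy\to 1$ and $|\nabla u_N|^2\,dy\rightharpoonup\delta_x$ as Radon measures.

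With concentration in hand, for continuous $\psi$ I would split
\begin{equation*}
\int_\Omega\psi(y)|\nabla u_N|^2\,dy=\psi(x)\int_\Omega|\nabla u_N|^2\,dy+\int_\Omega(\psi(y)-\psi(x))|\nabla u_N|^2\,dy,
\end{equation*}
noting that the first term tends to $\psi(x)$ by the energy statement and the second vanishes because $|\psi(y)-\psi(x)|$ is uniformly small on the shrinking support of the concentrated measure. The main obstacle is the quantitative comparison $\int_\Omega|\nabla(u_N-v_N)|^2\,dy\to 0$ at the oscillation scale $N$: since the boundary is only Lipschitz and $\gamma$ is only continuous, neither the explicit half-space formula nor the constant-coefficient freezing is literally valid, and extracting vanishing error estimates that survive the highly oscillatory frequency is the technical crux. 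It is precisely this step where the a.e.\ choice of $x$ enters and where one follows the detailed argument of Brown \cite{Bro1}.
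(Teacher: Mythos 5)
The paper itself does not prove this theorem: it is stated as a reformulation of the pointwise boundary-recovery result of Brown \cite{Bro1}, and your sketch reconstructs essentially that argument — boundary data oscillating at frequency $N$ and supported at scale $N^{-1/2}$, comparison of $u_N$ with an explicit frozen-coefficient model on the half space, concentration of the energy measure $|\nabla u_N|^2\,dy$ at $x$, and then the continuity of $\psi$. The norm computations, the Plancherel evaluation of the model energy (which is independent of $\gamma$, since constants drop out of $\divergence(\gamma(x)\nabla\cdot)$, so one normalization of $\eta$ serves all admissible $\gamma$, as the quantifier order in the statement requires), and the final splitting $\int_\Omega\psi|\nabla u_N|^2 = \psi(x)\int_\Omega|\nabla u_N|^2 + \int_\Omega(\psi-\psi(x))|\nabla u_N|^2$ are all in order.

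The step that is wrong as written is your justification of the boundary-flattening error: you claim the term with coefficient $O(|\nabla\phi|)$ is ``small on the support of $f_N$ because $\nabla\phi(0)=0$''. For a Lipschitz graph, differentiability at the single point $x$ gives $\phi(y')=o(|y'|)$ but yields no pointwise smallness of $\nabla\phi$ in any neighbourhood of $x$; on $\{|y'|<N^{-1/2}\}$ the gradient may be comparable to the Lipschitz constant on a set of substantial measure. The a.e.\ selection of $x$ must instead be made so that $x$ is a Lebesgue point of $\nabla\phi$ (equivalently, of the unit normal $\nu$), and the corresponding error terms have to be estimated in averaged form, through integrals of $|\nu-e_n|$ (and of $\gamma-\gamma(x)$) against the concentrated energy density; since that density lives in an anisotropic region of tangential size $N^{-1/2}$ and normal size $N^{-1}$, the averaged smallness over the surface ball does not pass to the energy comparison by a one-line Cauchy--Schwarz, and organizing this is exactly the technical content of \cite{Bro1} to which you ultimately defer. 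A minor further point: you restrict the admissible $x$ to Lebesgue points of $\gamma|_{\partial\Omega}$, but in the statement the $f_N$ are chosen before $\gamma$ is given, so the exceptional set may not depend on $\gamma$; this is harmless only because $\gamma$ is assumed continuous, so the frozen-coefficient error is small at every boundary point, and the argument should be phrased that way.
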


As an immediate consequence of the above theorem, we obtain recovery
at the boundary and a stability result. 
The  stability result for the recovery of the boundary values of a
continuous coefficient in smooth domains was  proved by Sylvester and Uhlmann
\cite[Theorem 0.2]{SU2}.   

\begin{theorem} 
Let $ \gamma$ be continuous function in $ \bar \Omega $  and let $x
\in \partial \Omega$ and  $
f_N$ be as in Theorem \ref{Brown}, then we have
$$
\gamma (x) = \lim _{ N \rightarrow \infty } \langle \Lambda _ \gamma f
_N , \bar f_N \rangle. 
$$

As a consequence, we obtain the stability result: If $ \gamma$ and $
\tilde \gamma$ are continuous on $ \bar \Omega$ and elliptic, then 

$$
\| \gamma -  \tilde \gamma\| _ { L^ \infty ( \Omega)} \leq C \| \Lambda
_ \gamma - \Lambda _ { \tilde \gamma } \| _ { { \cal L } ( L^ { 2,
    1/2}( \partial \Omega) , L^ { 2, -1/2} ( \partial \Omega))}.
$$
\end{theorem}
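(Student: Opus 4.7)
The key observation is that Theorem \ref{Brown} with $\psi = \gamma$ produces exactly the quantity $\int_\Omega \gamma |\nabla u_N|^2\, dy$, which by the very definition of the Dirichlet--to--Neumann map in this appendix equals the bilinear pairing $\langle \Lambda_\gamma f_N, \bar f_N\rangle$. Indeed, choosing the $H^1(\Omega)$ extension $\phi = \bar u_N$ of $\bar f_N\in H^{1/2}(\partial\Omega)$ gives
\[
\langle \Lambda_\gamma f_N, \bar f_N\rangle = \int_\Omega \gamma \nabla u_N\cdot \nabla \bar u_N\, dy = \int_\Omega \gamma |\nabla u_N|^2\, dy,
\]
and the right-hand side is legitimate because the pairing is independent of the chosen extension. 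Since $\gamma$ is continuous on $\bar\Omega$, Theorem \ref{Brown} applies with $\psi = \gamma$, yielding
\[
\gamma(x) = \lim_{N\to\infty} \int_\Omega \gamma |\nabla u_N|^2\, dy = \lim_{N\to\infty} \langle \Lambda_\gamma f_N, \bar f_N\rangle
\]
for almost every $x\in\partial\Omega$. This establishes the first identity.

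For the stability statement, I would exploit the crucial fact that the family $\{f_N\}$ depends only on the point $x\in\partial\Omega$ (and on the Lipschitz geometry of $\Omega$), but not on the coefficient. Consequently, applying the first identity separately to $\gamma$ and $\tilde\gamma$ and subtracting gives, for a.e. $x\in\partial\Omega$,
\[
\gamma(x) - \tilde\gamma(x) = \lim_{N\to\infty} \langle (\Lambda_\gamma - \Lambda_{\tilde\gamma}) f_N, \bar f_N\rangle.
\]
The duality pairing is bounded by the operator norm from $L^{2,1/2}(\partial\Omega)$ to $L^{2,-1/2}(\partial\Omega)$, so
\[
|\gamma(x) - \tilde\gamma(x)| \leq \|\Lambda_\gamma - \Lambda_{\tilde\gamma}\|_{\mathcal{L}(L^{2,1/2},L^{2,-1/2})} \, \limsup_{N\to\infty} \|f_N\|_{L^{2,1/2}(\partial\Omega)}^2.
\]
The size bound $\|f_N\|_{L^{2,s}(\partial\Omega)} \leq C N^{s-1/2}$ stated in Theorem \ref{Brown}, specialized to $s = 1/2$, gives a uniform bound $\|f_N\|_{L^{2,1/2}(\partial\Omega)}\leq C$.

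Taking the essential supremum over $x\in\partial\Omega$, and using the continuity of $\gamma-\tilde\gamma$ on $\bar\Omega$ (so that the boundary supremum controls $\|\gamma - \tilde\gamma\|_{L^\infty(\partial\Omega)}$), yields the desired inequality. The only real work is organizing the application of Theorem \ref{Brown} and verifying that the pairing formula for $\Lambda_\gamma$ matches the integrand of that theorem; no additional obstacle arises, since the uniform control of $\|f_N\|_{L^{2,1/2}}$ is built into Theorem \ref{Brown}. The one cosmetic point to flag is that the conclusion naturally lives on $\partial\Omega$: only boundary values are actually controlled by this method, so the $L^\infty(\Omega)$ norm on the left side should be understood as the sup of boundary values of the continuous functions $\gamma,\tilde\gamma$.
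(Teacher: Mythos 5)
Your proposal is correct and follows essentially the same route as the paper, which simply states that the theorem ``follows immediately from Theorem \ref{Brown}''; your computation $\langle \Lambda_\gamma f_N,\bar f_N\rangle=\int_\Omega\gamma|\nabla u_N|^2\,dy$ together with the choice $\psi=\gamma$, and the uniform bound $\|f_N\|_{L^{2,1/2}(\partial\Omega)}\le C$ for the stability estimate, is exactly the intended filling-in of that one-line proof. Your remark that the left-hand side of the stability inequality really lives on $\partial\Omega$ (as in the Sylvester--Uhlmann result cited) is also the correct reading of the statement.
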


\begin{proof} This follows immediately from Theorem \ref{Brown}. 
\end{proof}

If $ \gamma$ is continuously differentiable in the closure of
$\Omega$,  we have additional
regularity of the  solution  $ u_N$. Since the boundary values $ f_N$
lie in $ L^ { 2,1 } ( \partial \Omega)$, we have that  the
full gradient of $u_N$  lies in $ L^ 2 ( \partial \Omega)$ and we
obtain 
estimates for the non-tangential maximal function  of the gradient $
(\nabla u _N ) ^ *$
\begin{equation}
\label{NTEstimates}
\| ( \nabla u_N ) ^ * \|_ { L^ 2 ( \partial \Omega)}
\leq C \| f _N\| _ { L ^ { 2,1 } ( \partial \Omega)} \leq CN^ { 1/2}. 
\end{equation}
This result holds for elliptic operators in Lipschitz domains with
$C^1$ coefficients and follows from the work of Mitrea and Taylor on
manifolds with $C^1 $-metrics \cite[Section 7]{MT1}. 
We will use the estimate   (\ref{NTEstimates})  to justify the use of
the  divergence theorem for expressions involving the  gradient of a
solution   on the boundary. 
  In our next result, we let $ \nabla _t u = \nabla u
- \nu \frac {\partial u } { \partial \nu }$ denote the tangential
component of the gradient of $u$.     The following theorem gives a
recipe for recovering $ \nabla \gamma$ from the gradient of $f_N$ on
the boundary, the boundary values of $ \gamma$, 
 and the Dirichlet to Neumann  map acting on $f_N$. 

\begin{theorem}
Let $ \Omega$ be a Lipschitz domain and let $ x $ and $f_N$ be as in
Theorem \ref{Brown}. Let $ \alpha $ be a constant vector. We have
$$
\alpha \cdot \nabla \gamma (x) = 
\lim _ { N \rightarrow \infty } \int _ { \partial \Omega } 
(\gamma |\nabla _t f_N |^ 2 + \frac 1 \gamma |\Lambda _ \gamma f_N| ^
2) \alpha \cdot \nu 
-2 \Re  ( ( \Lambda _ \gamma f_N  ) \,  \alpha  \cdot \nabla \bar  u_N  ) \,
d\sigma 
$$
\end{theorem}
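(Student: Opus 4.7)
The approach will be a Rellich-type identity combined with Theorem \ref{Brown} applied with $\psi = \alpha \cdot \nabla \gamma$, which is continuous on $\bar\Omega$ because $\gamma \in C^1(\bar\Omega)$.

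First I would derive an identity relating an interior integral involving $\nabla\gamma$ to boundary data. Start with the divergence theorem applied to the vector field $\gamma |\nabla u_N|^2 \alpha$ with $\alpha$ a constant vector. Expanding the divergence gives
\[
\int_{\partial\Omega} \gamma |\nabla u_N|^2 \, \alpha\cdot\nu \, d\sigma
= \int_\Omega (\alpha\cdot\nabla\gamma)|\nabla u_N|^2 \, dy
+ 2\Re \int_\Omega \gamma \, \nabla u_N \cdot \nabla(\alpha\cdot\nabla \bar u_N) \, dy,
\]
where I used $\alpha\cdot\nabla(\nabla u_N \cdot \nabla \bar u_N) = 2\Re(\nabla u_N \cdot \nabla(\alpha\cdot\nabla \bar u_N))$ since $\alpha$ is constant. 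Next I would integrate by parts in the last integral using the equation $\divergence(\gamma\nabla u_N)=0$ to get
\[
\int_\Omega \gamma \, \nabla u_N \cdot \nabla(\alpha\cdot\nabla \bar u_N) \, dy
= \int_{\partial\Omega} (\Lambda_\gamma f_N)(\alpha\cdot\nabla \bar u_N) \, d\sigma.
\]
Rearranging produces
\[
\int_\Omega (\alpha\cdot\nabla\gamma)|\nabla u_N|^2 \, dy
= \int_{\partial\Omega} \gamma |\nabla u_N|^2 \, \alpha\cdot\nu \, d\sigma
- 2\Re \int_{\partial\Omega} (\Lambda_\gamma f_N)(\alpha\cdot\nabla \bar u_N) \, d\sigma.
\]

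Second, I would rewrite the first boundary integrand in terms of $f_N$ and $\Lambda_\gamma f_N$. On $\partial\Omega$ one has the orthogonal decomposition $\nabla u_N = \nabla_t f_N + \nu \, \partial u_N/\partial\nu$, and since $\gamma \, \partial u_N/\partial\nu = \Lambda_\gamma f_N$, we get
\[
\gamma |\nabla u_N|^2 = \gamma |\nabla_t f_N|^2 + \frac{1}{\gamma}|\Lambda_\gamma f_N|^2.
\]
Substituting yields the right-hand side of the theorem exactly.

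Finally, applying Theorem \ref{Brown} with $\psi = \alpha\cdot\nabla\gamma$ shows that the left-hand side $\int_\Omega (\alpha\cdot\nabla\gamma) |\nabla u_N|^2 \, dy$ converges to $\alpha\cdot\nabla\gamma(x)$, giving the claim. The main technical obstacle is justifying the two applications of the divergence theorem and the existence of the nontangential boundary trace of $\nabla u_N$ in $L^2(\partial\Omega)$. Both rely on the nontangential estimate \eqref{NTEstimates}, i.e.\ Mitrea and Taylor's results \cite{MT1} for divergence-form operators with $C^1$ coefficients on Lipschitz domains. The cleanest way to handle it is to work first on a sequence of smooth domains $\Omega_j \Subset \Omega$ approximating $\Omega$ from inside (or use the standard Lipschitz approximation of $\partial\Omega$), where the classical divergence theorem applies with no subtleties, and then pass to the limit using the $L^2$ convergence of $(\nabla u_N)^*$ to the nontangential trace supplied by \eqref{NTEstimates}.
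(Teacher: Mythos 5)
Your proposal is correct and follows essentially the same route as the paper: the identity you derive by applying the divergence theorem to $\gamma|\nabla u_N|^2\alpha$ and integrating by parts against the equation is exactly the Rellich identity the paper invokes, after which both arguments use the boundary decomposition $\gamma|\nabla u_N|^2 = \gamma|\nabla_t f_N|^2 + \frac{1}{\gamma}|\Lambda_\gamma f_N|^2$ and conclude via Theorem \ref{Brown} with $\psi = \alpha\cdot\nabla\gamma$, justifying the boundary integrations through the nontangential maximal estimate \eqref{NTEstimates}. The only difference is cosmetic: you derive the Rellich identity from scratch (and sketch the approximating-domain justification), while the paper cites it directly.
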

\begin{proof}
The rather mysterious expression on the right is obtain by rewriting
the Rellich identity \cite{Re1}  using  $\frac 1 \gamma  \Lambda _
\gamma f_N$ for 
the  normal derivative. The following identity holds because the
integrands are equal as functions
\begin{multline*}
\int _ { \partial \Omega } 
(\gamma |\nabla _t f_N |^ 2 + \frac 1 \gamma |\Lambda _ \gamma f_N  |^
2) \alpha \cdot \nu  
-2 \Re  ( \Lambda _ \gamma f_N \, \alpha  \cdot \nabla\bar  u_N  ) \,
d\sigma 
\\
= \int _ {\partial \Omega} \gamma |\nabla u _N| ^ 2 \alpha \cdot \nu - 2 \Re ( \gamma \frac  {
  \partial u }{ \partial \nu } \, \alpha\cdot \nabla \bar u_N ) \, d\sigma .
\end{multline*}
Recalling that $u_N$ is   a solution,  an application of the
divergence theorem gives 
$$
\int _ {\partial \Omega } \gamma |\nabla u _N| ^ 2 \alpha \cdot \nu - 2 \Re ( \gamma \frac  {
  \partial u }{ \partial \nu }\, \alpha\cdot \nabla \bar u_N ) \, d\sigma 
= \int_ \Omega \alpha \cdot \nabla \gamma \,  |\nabla u _N | ^2 \, dy .
$$
We use the non-tangential maximal function estimate (\ref{NTEstimates}) to justify the
application of the divergence theorem.   With this, the theorem
follows from the properties of $ u_N$ given in Theorem \ref{Brown}. 
\end{proof}

\begin{acknowledgements}
The authors want to express their gratitude to Professor Mikko Salo and to Pedro Caro for many useful suggestions and deep comments.  The authors would like to
 sincerely thank Professor R. M. Brown for pointing out the result appearing in the appendix.
The first author is supported by the projects ERC-2010 Advanced Grant, 267700 Ð InvProb and Academy of Finland
 (Decision number 250215, the Centre of Excellence in Inverse Problems) and also belongs to the project MTM2007-62186 Ministerio de Ciencia y
 Tecnolog\'ia de Espa\~na. 
 The second author is  supported by Academy of Finland 
(Project number 2100001796, the Finnish Centre of Excellence in Inverse Problems).
\end{acknowledgements}

%\bibliographystyle{alpha}
%\bibliography{INVERSEPROBLEM}

%\def\cprime{$'$}

\end{document}